\documentclass[12pt]{article}
\setlength{\voffset}{-.75truein}
\setlength{\textheight}{9truein}
\setlength{\textwidth}{6.9truein}
\setlength{\hoffset}{-.9truein}

\usepackage{amsthm,amsfonts,amsmath, amscd,dsfont}
\usepackage{mathrsfs}
\usepackage{comment}

\swapnumbers
                              

\pagestyle{myheadings}

\theoremstyle{plain}
\newtheorem{thm}{THEOREM}[section]
\newtheorem{lm}[thm]{LEMMA}
\newtheorem{cl}[thm]{COROLLARY}

\theoremstyle{definition}
\newtheorem{defi}[thm]{DEFINITION}
\newtheorem{remark}[thm]{Remark}
\theoremstyle{remark}
\newcommand{\upchi}{\raise1pt\hbox{$\chi$}}
\newcommand{\R}{{\mathord{\mathbb R}}}
\newcommand{\C}{{\mathord{\mathbb C}}}
\newcommand{\Z}{{\mathord{\mathbb Z}}}
\newcommand{\N}{{\mathord{\mathbb N}}}

\newcommand{\tr}{{\rm Tr}}
\renewcommand{\|}{{\Vert}}
\numberwithin{equation}{section}
\pagestyle{myheadings} \sloppy

\newcommand{\un}{{\rm 1\kern -2.5pt l}}
\newcommand{\id}{{\rm Id}}

\newcommand{\one}{{\mathds1}}

\begin{document}

\title{{\sc  Monotonicity versions of Epstein's Concavity Theorem and related inequalities}}
\author{
\vspace{5pt}  Eric A. Carlen$^{1}$  and Haonan Zhang$^{2}$ \\
\vspace{5pt}\small{$^{1}$ Department of Mathematics, Hill Center,}\\[-6pt]
\small{Rutgers University,
110 Frelinghuysen Road
Piscataway NJ 08854-8019 USA}\\
\vspace{5pt}\small{$^{2}$ Institute of Science and Technology Austria,
	Am Campus 1, 3400 Klosterneuburg, Austria}\\[-6pt]
 }

\maketitle

\begin{abstract} Many trace inequalities can be expressed  either as concavity/convexity theorems or as monotonicity theorems. A classic example is the joint convexity of the quantum relative entropy which is equivalent to the Data Processing Inequality.  The latter says that quantum operations can never increase the relative entropy.  The monotonicity versions often have many advantages, and often have direct physical application, as in the example just mentioned. Moreover, the monotonicity results are often valid for a larger class of maps than, say, quantum operations (which are completely positive). In this paper we prove several new monotonicity results, the first of which is a monotonicity theorem that has as a simple corollary  a celebrated concavity theorem of Epstein.  Our starting points are the monotonicity versions of the Lieb Concavity and the Lieb Convexity Theorems. We also give two new proofs of these in their general forms using interpolation. We then prove our new monotonicity theorems by several duality arguments. 

\end{abstract}

\footnotetext [1]{Work partially supported by U.S. National Science Foundation grant DMS 2055282.}

\footnotetext [2]{Work partially supported by the Lise Meitner fellowship, Austrian Science Fund (FWF) M3337.
	 \hfill\break
\copyright\, 2022 by the authors. This paper may be reproduced, in its
entirety, for non-commercial purposes.
}

\medskip

\centerline{Key Words: monotonicity, concavity/convexity, duality, trace inequality, Schwarz maps}

\centerline{ MSC: 39B62, 81P45, 94A17}

\maketitle

\section{Introduction}

Many important concavity and convexity theorems for matrix trace functionals have formulations as monotonicity theorems under some appropriate 
class of positive maps.  In the following, $M_m(\C)$ denotes the $m\times m$ complex matrices,  $M_m^{+}(\C)$, denotes the closed cone in  
$M_m(\C)$ consisting of positive semidefinite matrices, and  $M_m^{++}(\C)$, denotes the open cone in  $M_m(\C)$ consisting of positive 
definite matrices.  A linear  map $\Phi: M_n(\C) \to M_m(\C)$ is positive in case $\Phi(X) \in M_m^{+}(\C)$ whenever $X\in M_n^{+}(\C)$.  
Some more refined notions of positivity will be central here, and these are recalled in the next section. 
To keep the introduction brief, we assume  for now that the notion of complete positivity is familiar to the reader.   For each $n$,  equip $M_n(\C)$ with the Hilbert--Schmidt inner product, making it a Hilbert space. With respect to this Hilbert space structure, every linear $\Phi: M_n(\C) \to M_m(\C)$ has an adjoint, denoted $\Phi^\dagger$, with $\Phi^\dagger:M_m(\C) \to M_n(\C)$. A map  $\Phi: M_n(\C) \to M_m(\C)$ is {\em unital} in case
$\Phi(\one) = \one$, and it is {\em trace preserving} in case $\tr[\Phi(X)] = \tr[X]$ for all $X\in M_n(\C)$. It is easy to see that $\Phi$ is unital if and only if $\Phi^\dagger$ is trace preserving. Completely positive trace preserving maps are known as {\em quantum operations}. Any sort of physically possible manipulation of a quantum state is described by a quantum operation, hence highlighting the importance of this class of positive maps. 

In a fundamental  1973 paper \cite{Lieb73WYD}, Lieb proved a number of  concavity and 
convexity theorems for trace functionals of matrices. The first two of these were

\begin{thm}[Lieb Concavity Theorem]\label{L1}
For $s,t\ge 0$ and $s+t \leq 1$, and any fixed $K\in M_n(\C)$, the function
\begin{equation}\label{lieb1}
(X,Y) \mapsto \tr[K^*Y^s K X^t]
\end{equation}
is jointly concave on $M_n^+(\C)\times M_n^+(\C)$. 
\end{thm}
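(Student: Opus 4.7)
The plan is a complex-interpolation proof in the spirit of Uhlmann's approach to the Lieb concavity theorem. First, I would reduce to the case $s+t=1$. For $0 < s+t = r \le 1$, set $s'=s/r$ and $t'=t/r$ so that $s'+t'=1$ and
$$\tr[K^*Y^sKX^t] = \tr\bigl[K^*(Y^r)^{s'}K(X^r)^{t'}\bigr].$$
By the L\"owner--Heinz inequality, $A\mapsto A^r$ is operator concave on $M_n^+(\C)$ for $r\in(0,1]$. Assuming the $s'+t'=1$ case is proven, the outer functional $(A,B)\mapsto\tr[K^*B^{s'}KA^{t'}]$ is jointly concave, and it is jointly monotone non-decreasing in $(A,B)$ for $s',t'\in[0,1]$ (by operator monotonicity of $u^{s'}$ and $u^{t'}$ combined with the fact that $\tr[MN]$ is monotone in $N\in M_n^+(\C)$ whenever $M\in M_n^+(\C)$). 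Composing a jointly concave, jointly monotone functional with jointly operator-concave inner maps preserves joint concavity, so the general case follows.

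For $s+t=1$, the function $Q(X,Y):=\tr[K^*Y^sKX^{1-s}]$ is positively homogeneous of degree one, so joint concavity on the cone $M_n^+(\C)\times M_n^+(\C)$ is equivalent to superadditivity: $Q(X_1+X_2, Y_1+Y_2)\ge Q(X_1,Y_1)+Q(X_2,Y_2)$. Recasting $X_1,X_2$ and $Y_1,Y_2$ as diagonal blocks of matrices $X,Y\in M_{2n}^+(\C)$ and $K$ as the block $\widetilde K = K\oplus K$, this superadditivity is precisely the statement that $\tr[\widetilde K^*Y^s\widetilde KX^{1-s}]$ is monotone non-decreasing under the ``sum of diagonal blocks'' CPTP map applied to both $X$ and $Y$---exactly the sort of monotonicity version the paper emphasizes.

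To establish this superadditivity, I would apply the Hadamard three-lines theorem to an analytic family interpolating in the exponent. A natural family is
$$F(z) := \tr\!\left[K^*(Y_1+Y_2)^{\bar z}K(X_1+X_2)^{1-z}\right]$$
on the strip $\{0\le\Re z\le 1\}$. At $\Re z = 0$ the function is linear in $X_1+X_2$ and at $\Re z = 1$ it is linear in $Y_1+Y_2$; both endpoint expressions can then be bounded against the corresponding diagonal-block quantity via Cauchy--Schwarz and non-commutative H\"older in the appropriate $L^p$ space.

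The main obstacle is bridging the multiplicative log-convex inequality produced by three-lines with the additive superadditivity we want. This passage is typically accomplished via a variational representation of $Q(X,Y)$ as a supremum of linear functionals in $(X,Y)$, obtained from non-commutative H\"older duality with conjugate exponents $p=1/(1-s)$ and $q=1/s$. Setting up this variational formula cleanly---so that the interpolated bound becomes literally the superadditivity---is the real technical core of the argument.
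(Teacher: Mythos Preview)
Your reduction to the case $s+t=1$ via operator concavity of $A\mapsto A^r$ and monotonicity of the outer functional is correct, and your recasting of superadditivity as a monotonicity statement under the block-sum map is precisely the paper's mechanism for deriving concavity from monotonicity.

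The genuine gap is in the interpolation step. Your analytic family $F(z)=\tr[K^*(Y_1+Y_2)^{\bar z}K(X_1+X_2)^{1-z}]$ has $F(s)=Q(X_1+X_2,Y_1+Y_2)$, which is the \emph{larger} side of the desired inequality. The three-lines theorem produces only \emph{upper} bounds on $|F(s)|$ from boundary data, so this orientation cannot yield $Q(X_1+X_2,Y_1+Y_2)\ge Q(X_1,Y_1)+Q(X_2,Y_2)$. Moreover, your endpoints $F(0)=\tr[K^*K(X_1+X_2)]$ and $F(1)=\tr[K^*(Y_1+Y_2)K]$ involve only the sums, not the individual pairs $(X_j,Y_j)$, so there is no route from them to the diagonal-block quantity. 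You correctly sense the problem when naming the ``main obstacle,'' but the proposed remedy is left as a slogan; note also that writing $Q$ as a \emph{supremum} of linear functionals would make it convex, not concave, so the Legendre-type duality has to be arranged more carefully than you indicate.

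The paper's interpolation proof (of the monotonicity version, Theorem~\ref{L1M}, from which Theorem~\ref{L1} is then recovered via the block map \eqref{embed}) builds a substantially more intricate analytic function. With $M:=\Phi^\dagger(A)^{p/2}K\Phi^\dagger(B)^{(1-p)/2}$, one takes
\[
f(z)=\tr\!\left[\Phi\!\left(\Phi^\dagger(B)^{-\frac{1-z}{2}}M^*\Phi^\dagger(A)^{-\frac{z}{2}}\right)A^{z}\,\Phi\!\left(\Phi^\dagger(A)^{-\frac{z}{2}}M\Phi^\dagger(B)^{-\frac{1-z}{2}}\right)B^{1-z}\right],
\]
so that $f(p)$ equals the \emph{smaller} side, while $\tr[M^*M]$ (the larger side, independent of $z$) bounds $|f|$ on both boundary lines via Cauchy--Schwarz combined with the tracial Schwarz-map inequality $\tr[\Phi^\dagger(X)\Phi^\dagger(B)^{-1}\Phi^\dagger(X^*)]\le\tr[XB^{-1}X^*]$. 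The essential design principle, absent from your $F$, is that the interpolated exponent must live on the quantity one wishes to bound \emph{from above}, and the analytic function must be engineered---through the $M$ and the inverse powers $\Phi^\dagger(A)^{-z/2}$, $\Phi^\dagger(B)^{-(1-z)/2}$---so that the boundary estimates collapse to a single fixed right-hand side.
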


\begin{thm}[Lieb Convexity Theorem] \label{L2} For all $s,t\geq 0$ and $s+t \leq 1$, 
\begin{equation}\label{lieb2} 
(X,Y,K) \mapsto    \tr[ K^*Y^{-s}  KX^{-t}]
\end{equation} 
is jointly convex on $M_n^{++}(\C)\times M_n^{++}(\C)\times M_n(\C)$.  
\end{thm}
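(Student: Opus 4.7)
The plan is to prove the theorem by complex interpolation, applying Hadamard's three-lines lemma to a suitable operator-valued holomorphic function on the vertical strip $\bar{S} := \{z \in \C : 0 \le \Re z \le 1\}$.

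By continuity, it suffices to establish midpoint convexity along an arbitrary segment. Fix $\theta \in (0,1)$ and two triples $(X_j, Y_j, K_j) \in M_n^{++}(\C) \times M_n^{++}(\C) \times M_n(\C)$ for $j = 0, 1$, and set $X_\theta := (1-\theta) X_0 + \theta X_1$, with analogous definitions for $Y_\theta$ and $K_\theta$. The trace functional admits the representation
\[
\tr[K^* Y^{-s} K X^{-t}] = \| Y^{-s/2} K X^{-t/2} \|_{\rm HS}^2,
\]
where $\| \cdot \|_{\rm HS}$ is the Hilbert--Schmidt norm, so the target inequality becomes a joint convexity statement for squared Hilbert--Schmidt norms of the products $Y^{-s/2} K X^{-t/2}$.

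The heart of the argument is the construction of a bounded function $G : \bar{S} \to M_n(\C)$, holomorphic on the open strip and continuous on $\bar{S}$, with $G(\theta) = Y_\theta^{-s/2} K_\theta X_\theta^{-t/2}$ and with boundary bounds
\[
\| G(j + i\tau) \|_{\rm HS} \le \| Y_j^{-s/2} K_j X_j^{-t/2} \|_{\rm HS}, \qquad j = 0, 1, \ \tau \in \R.
\]
Granted such a $G$, Hadamard's three-lines lemma applied to $\log \| G(z) \|_{\rm HS}$ yields
\[
\| G(\theta) \|_{\rm HS}^2 \le \| Y_0^{-s/2} K_0 X_0^{-t/2} \|_{\rm HS}^{2(1-\theta)} \| Y_1^{-s/2} K_1 X_1^{-t/2} \|_{\rm HS}^{2\theta},
\]
and the weighted AM--GM inequality $a^{1-\theta} b^\theta \le (1-\theta) a + \theta b$ then converts this multiplicative bound into the desired arithmetic midpoint convexity.

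The main obstacle is the construction of $G$: holomorphic interpolation on the strip naturally produces Kubo--Ando geometric means rather than arithmetic means, whereas at $z = \theta$ we demand the arithmetic means $X_\theta, Y_\theta, K_\theta$. A natural approach is to take linear interpolants $X(z) = (1-z) X_0 + z X_1$ (similarly for $Y$ and $K$), which are holomorphic and give the correct value at $z = \theta$; the difficulty is that $X(z)$ is no longer self-adjoint for $z$ with nonzero imaginary part, forcing one to define $X(z)^{-t/2}$ through the holomorphic functional calculus and to control its spectrum away from the branch cut. The constraint $s + t \le 1$ enters precisely at the step of bounding $\| Y(z)^{-s/2} K(z) X(z)^{-t/2} \|_{\rm HS}$ on the boundary lines $\Re z = 0, 1$, where the combined complex exponents must remain in the range for which these boundary estimates hold. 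Executing this assembly and verifying the boundary bounds is the most delicate part of the proof.
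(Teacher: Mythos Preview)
Your plan has a genuine gap, and in fact the holomorphic function $G$ with the three listed properties cannot exist. With the linear interpolants $X(z)=(1-z)X_0+zX_1$ (and likewise for $Y,K$), the boundary values are $X(j+i\tau)=X_j+i\tau(X_1-X_0)$, equal to $X_j$ only at $\tau=0$; for large $|\tau|$ the factor $K(j+i\tau)$ grows like $|\tau|$ while $X(j+i\tau)^{-t/2}$ and $Y(j+i\tau)^{-s/2}$ decay only like $|\tau|^{-t/2}$ and $|\tau|^{-s/2}$, so $\|G(j+i\tau)\|_{\rm HS}\sim|\tau|^{1-(s+t)/2}\to\infty$. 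The hypothesis $s+t\le1$ forces this exponent to be at least $1/2$, so it hurts rather than helps. More decisively, the conclusion your scheme would deliver is the \emph{multiplicative} bound
\[
\|Y_\theta^{-s/2}K_\theta X_\theta^{-t/2}\|_{\rm HS}\le\|Y_0^{-s/2}K_0X_0^{-t/2}\|_{\rm HS}^{1-\theta}\,\|Y_1^{-s/2}K_1X_1^{-t/2}\|_{\rm HS}^{\theta},
\]
i.e.\ log-convexity of the trace functional itself, and that is false: already in the scalar case $s=t=0$ it would assert $|(1-\theta)K_0+\theta K_1|^2\le|K_0|^{2(1-\theta)}|K_1|^{2\theta}$, which fails for, say, $K_0$ small and $K_1=1$. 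Hence no holomorphic $G$ with your three properties exists for all data. Interpolating in the convex-combination parameter can only produce geometric-mean bounds --- precisely the Kubo--Ando obstruction you flagged --- and there is no way to ``execute the assembly'' around it.

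The paper's route is entirely different. It interpolates in the \emph{exponent}, not in the convex parameter: with $X,Y,K$ fixed and $M:=Y^{(t-1)/2}KX^{-t/2}$, one builds a function on the strip from powers $X^{z/2}$, $Y^{(1-z)/2}$, $\Phi^\dagger(X)^{-z}$, $\Phi^\dagger(Y)^{z-1}$ of \emph{fixed} positive matrices (so the functional calculus is elementary and boundary values differ from the endpoint values only by unitary factors). The three-lines lemma then yields the monotonicity inequality of Theorem~\ref{L2M} for every Schwarz map $\Phi$, the boundary estimates at $\Re z=0,1$ reducing to the Schwarz inequality for $\Phi$ together with the tracial inequality \eqref{tracial}. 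The joint convexity of Theorem~\ref{L2} is finally recovered \emph{algebraically}, by specializing $\Phi$ to the block-doubling map \eqref{embed}: its adjoint sums the diagonal blocks and so encodes the arithmetic mean, as noted in the remark after Theorem~\ref{L2MB}.
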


In both cases, by a simple argument originally pointed out by Araki \cite{A75}, the general case $s+t \leq 1$ follows from the special case $s+t =1$; see \cite[Section 2]{Carlen22review} for more information. Lindblad \cite{Lind74} took the further special case in which $K =\one$ and 
$X$ and $Y$ are both non-degenerate  density matrices; i.e., elements of $M_n^{++}(\C)$ with unit trace, to deduce that 
\begin{equation}\label{relentrop}
 D(X||Y)  :=  \tr[X(\log X - \log Y)] = \lim_{t\uparrow 1}\frac{1}{1-t}\left(1 -  \tr[Y^{1-t}X^t]\right)\ 
\end{equation}
is {\em jointly convex} in $X$ and $Y$.   The quantity $ D(X||Y) $ is known as the {\em Umegaki quantum relative entropy} of $X$ with respect to $Y$.  Lindblad  in 1975 \cite{Lind75} then went on to prove the monotonicity version of this result: 
\begin{thm}[Data Processing Inequality]\label{thm:DPI}  For all completely positive trace preserving maps $\Phi^\dagger:M_m(\C) \to M_n(\C)$, and all density matrices $X,Y \in M_m^{++}(\C)$, 
\begin{equation}\label{DPI}
D(\Phi^\dagger(X)||\Phi^\dagger(Y)) \leq D(X ||Y)\ .
\end{equation}
\end{thm}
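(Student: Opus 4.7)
The plan is to derive DPI from the joint convexity of the Umegaki relative entropy, which is already available via \eqref{relentrop} and Theorem~\ref{L1}. Setting $K=\one$ and $s+t=1$ in Theorem~\ref{L1}, the map $(X,Y)\mapsto \tr[Y^{1-t}X^t]$ is jointly concave on $M_n^{++}(\C)\times M_n^{++}(\C)$ for each $t\in[0,1)$. Consequently each finite-difference quotient $\frac{1}{1-t}(1-\tr[Y^{1-t}X^t])$ is jointly convex in $(X,Y)$, and its pointwise limit as $t\uparrow 1$ delivers joint convexity of $D(\cdot\,||\,\cdot)$ on density matrices.

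The second ingredient is the Stinespring dilation. Writing $\Phi^\dagger$ in Kraus form $\Phi^\dagger(Z)=\sum_k A_k Z A_k^*$ with $\sum_k A_k^*A_k=\one_m$ (trace preservation), I define the isometry $V:\C^m\to\C^n\otimes\KK$ by $V|\psi\rangle=\sum_k A_k|\psi\rangle\otimes|k\rangle$, where $\KK=\C^K$. Then $\Phi^\dagger(Z)=\tr_\KK[VZV^*]$, so it suffices to prove DPI separately for conjugation by an isometry and for the partial trace $\tr_\KK$.

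The isometric step is an exact equality: from $V^*V=\one_m$ one has $(VZV^*)^p=VZ^pV^*$ (via functional calculus on the range of $V$) for each $Z\in M_m^{++}(\C)$ and $p>0$, hence
\[
\tr[(VYV^*)^{1-t}(VXV^*)^t]=\tr[Y^{1-t}V^*V X^t V^*V]=\tr[Y^{1-t}X^t]
\]
for $t\in(0,1)$, and \eqref{relentrop} yields $D(VXV^*||VYV^*)=D(X||Y)$. For the partial trace step, fix a unitary $1$-design $\{U_i\}_{i=1}^N$ on $\KK$, i.e.\ $\frac{1}{N}\sum_i U_i A U_i^*=K^{-1}(\tr A)\one_\KK$ for all $A\in M_K(\C)$. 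Then for any $Z\in M_n(\C)\otimes M_K(\C)$,
\[
\frac{1}{N}\sum_{i=1}^N(\one\otimes U_i)Z(\one\otimes U_i^*)=\tr_\KK(Z)\otimes\tfrac{\one}{K}.
\]
Applying joint convexity of $D$ to this identity with $Z\in\{VXV^*,VYV^*\}$ and using unitary invariance $D(UZU^*||UWU^*)=D(Z||W)$ yields
\[
D\bigl(\Phi^\dagger(X)\otimes\tfrac{\one}{K}\,||\,\Phi^\dagger(Y)\otimes\tfrac{\one}{K}\bigr)\leq D(VXV^*||VYV^*).
\]
The left-hand side collapses to $D(\Phi^\dagger(X)||\Phi^\dagger(Y))$ by tensor additivity (the $\one/K$ factor contributes $0$), while the isometric step converts the right-hand side to $D(X||Y)$.

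The main technical care concerns the non-invertibility of $VXV^*$ and $VYV^*$ in $M_n(\C)\otimes M_K(\C)$. One may either define $D$ via support projections (finite whenever $\supp\rho\subseteq\supp\sigma$) or first regularize by $X_\eps=X+\eps\one$, $Y_\eps=Y+\eps\one$, renormalize to unit trace, run the argument, and let $\eps\downarrow 0$ using lower semicontinuity of $D$. Both paths are routine here because $\supp(VZV^*)$ equals the range of $V$ for every $Z\in M_m^{++}(\C)$, so the required support inclusions hold automatically throughout.
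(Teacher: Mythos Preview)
Your proof is correct and follows exactly the route the paper sketches immediately after stating the theorem: Lindblad's derivation of \eqref{DPI} from the joint convexity of $D(\cdot\|\cdot)$ (obtained via Theorem~\ref{L1} and \eqref{relentrop}) together with the Stinespring Dilation Theorem, reducing to an isometric embedding followed by a partial trace handled through a unitary average and convexity. The paper does not supply its own detailed proof of Theorem~\ref{thm:DPI} beyond this sketch, so there is nothing further to compare; note only that the paper also points to an alternative derivation via Theorem~\ref{L1M} (take $K=\one$ and differentiate at $p=1$), which yields the stronger conclusion for unital Schwarz maps.
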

This inequality, which roughly says that any operation one may perform on two quantum states can only make them harder to distinguish, is one of the cornerstones of quantum information theory.   

It is correct to refer to the Data Processing 
Inequality as a  {\em monotonicity version} of the joint convexity of the quantum relative entropy because it is easy to prove one from the other. 
On the one hand, Lindblad derived \eqref{DPI} from the joint convexity of $D(X||Y)$  using the structure theory for completely positive 
maps provided by the {\em Stinespring Dilation Theorem }\cite{S55}.   On the other hand, one recovers the joint convexity statement by considering 
a very particular quantum operation, namely the partial trace. See \cite{Carlen22review} for more information, though we provide examples of this below.

Already, this example shows the merit in recasting a concavity or convexity theorem as a monotonicity theorem -- {\em often the monotonicity has  
a significant physical interpretation}.  There is however, another advantage of the monotonicity formulations: They often 
hold for a wider class of positive maps extending beyond  the class of completely positive maps, and these more general results 
may no longer be deduced from convexity or concavity statements since the 
 Stinespring Dilation Theorem is then not available.   
 
 The first result of this type is due to Uhlmann \cite[Proposition 17]{Uhlmann77} in 1977 who proved a monotonicity formulation of the Lieb Concavity Theorem. The theorem refers to the class of positive maps  $\Phi:M_n(\C) \to M_m(\C)$  such that for all  $K\in M_n(\C)$, 
 \begin{equation}\label{schwarz1}
 \Phi(K^*K) \geq \Phi(K)^*\Phi(K)\ .
 \end{equation}
 Terminology is not completely standardized, but in this paper we refer to this class of maps as the class of  {\em Schwarz maps}.
 Evidently such a map is positive, and as we recall in the next section, the class of Schwarz maps strictly includes the class of  completely positive maps.
  
 \begin{thm}\label{L1M}
	 For all $0 \leq p \leq 1$, all $m,n\in \N$, all 
	$A,B \in M_m^{+}(\C)$ and all $K\in M_n(\C)$,  and all  Schwarz maps $\Phi: M_n(\C)\to M_m(\C)$,
		\begin{equation}\label{lieb22concave} 
	\tr [\Phi(K)^\ast A^{p} \Phi(K)B^{1-p}]\le \tr [K^\ast \Phi^\dagger(A)^{p} K \Phi^\dagger(B)^{1-p}] \ .
	\end{equation} 
\end{thm}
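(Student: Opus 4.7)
The plan is to recast \eqref{lieb22concave} as a Hilbert--Schmidt contractivity statement for an analytic family of linear maps on the strip $S = \{z \in \C : 0 \le \Re z \le 1\}$, whose boundary bounds are direct consequences of the Schwarz inequality for $\Phi$.

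Using the identity $\tr[K^* A^p K B^{1-p}] = \|A^{p/2} K B^{(1-p)/2}\|_2^2$, inequality \eqref{lieb22concave} is equivalent to the Hilbert--Schmidt bound
$$
\|A^{p/2}\Phi(K)B^{(1-p)/2}\|_2 \le \|\Phi^\dagger(A)^{p/2} K \Phi^\dagger(B)^{(1-p)/2}\|_2.
$$
By regularizing with small positive perturbations, I would assume $A, B \in M_m^{++}(\C)$ and $\Phi^\dagger(A), \Phi^\dagger(B) \in M_n^{++}(\C)$. For $z \in S$ define the analytic family
$$
S_z(X) := A^{z/2}\, \Phi\!\left(\Phi^\dagger(A)^{-z/2}\, X\, \Phi^\dagger(B)^{-(1-z)/2}\right) B^{(1-z)/2}, \qquad X \in M_n(\C).
$$
The substitution $X := \Phi^\dagger(A)^{p/2} K \Phi^\dagger(B)^{(1-p)/2}$ converts the target inequality at $z = p$ into the contraction bound $\|S_p(X)\|_2 \le \|X\|_2$, so it suffices to show $\|S_p\|_{2\to 2} \le 1$ for all $p \in [0,1]$.

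On the line $\Re z = 0$, the imaginary powers of $A, B, \Phi^\dagger(A), \Phi^\dagger(B)$ are unitary and cancel after cyclic rearrangement inside the trace, leaving $\|S_{iy}(X)\|_2^2 = \tr\bigl[B\,\Phi(Y)^*\Phi(Y)\bigr]$ with $Y = \Phi^\dagger(A)^{-iy/2} X \Phi^\dagger(B)^{-1/2}\Phi^\dagger(B)^{iy/2}$. The Schwarz bound $\Phi(Y)^*\Phi(Y) \le \Phi(Y^*Y)$ combined with the adjoint identity $\tr[B\,\Phi(\cdot)] = \tr[\Phi^\dagger(B)\,(\cdot)]$ telescopes this to $\|X\|_2^2$. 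On $\Re z = 1$, applying Schwarz to $Y^*$ (valid because positive $\Phi$ is Hermiticity-preserving, so $\Phi(Y^*) = \Phi(Y)^*$) gives $\Phi(Y)\Phi(Y)^* \le \Phi(YY^*)$, and the symmetric computation again yields $\|X\|_2^2$. Hadamard's three-lines theorem, applied to the scalar analytic functions $z \mapsto \tr[W^* S_z(X)]$ with $W \in M_m(\C)$, $X \in M_n(\C)$, $\|W\|_2 = \|X\|_2 = 1$, then propagates $\|S_z\|_{2\to 2} \le 1$ from the boundary to all of $S$.

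The main subtlety lies in the choice of analytic family. Applying three-lines naively to $T_z(K) := A^{z/2}\Phi(K)B^{(1-z)/2}$ alone yields only $\|T_p(K)\|_2 \le \|\tilde T_0(K)\|_2^{1-p}\|\tilde T_1(K)\|_2^p$ with $\tilde T_z(K) := \Phi^\dagger(A)^{z/2} K \Phi^\dagger(B)^{(1-z)/2}$; but $p \mapsto \|\tilde T_p(K)\|_2^2$ is log-convex (by H\"older applied after diagonalizing $\Phi^\dagger(A)$ and $\Phi^\dagger(B)$), so the geometric interpolant overshoots $\|\tilde T_p(K)\|_2$ and the naive bound is strictly too weak to give what we want. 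Pre-composing $T_z$ with $\tilde T_z^{-1}$ to form $S_z$ is precisely the fix that makes both endpoint bounds collapse, via a single clean application of the Schwarz inequality, into the contractivity of $S_z$ on Hilbert--Schmidt space.
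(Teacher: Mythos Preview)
Your proof is correct and rests on the same analytic family as the paper's interpolation proof in Section~5: with $M=\Phi^\dagger(A)^{p/2}K\Phi^\dagger(B)^{(1-p)/2}$, the paper's scalar function $f(z)$ is built from exactly your $S_z(M)$, and both arguments reduce to the three-lines lemma. The difference lies in how the boundary bounds are established. You frame the problem as Stein-type contractivity of $S_z$ on Hilbert--Schmidt space and apply three-lines to $z\mapsto \tr[W^*S_z(X)]$; on each boundary line a single application of the Schwarz inequality $\Phi(Y)^*\Phi(Y)\le\Phi(Y^*Y)$ (or its $Y\mapsto Y^*$ variant) together with the adjoint identity collapses $\|S_{iy}(X)\|_2^2$ to $\|X\|_2^2$. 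The paper instead applies three-lines directly to the single analytic scalar $f(z)$; since $f(iy)$ is genuinely complex, bounding $|f(iy)|$ requires a Cauchy--Schwarz step, then the tracial inequality $\tr[\Phi^\dagger(X)\Phi^\dagger(B)^{-1}\Phi^\dagger(X^*)]\le\tr[XB^{-1}X^*]$ of \cite{CMH22}, and finally the Schwarz inequality. Your packaging thus avoids the external input from \cite{CMH22} and is somewhat more economical on the boundary, at the cost of invoking three-lines for an operator family rather than a single scalar function; the underlying interpolation idea is the same.
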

Assume furthermore that $\Phi$ is unital. Again, taking $K=\one$ and differentiation in $p$ at $p=1$, one obtains \eqref{DPI}, but now under the more general conditions that $\Phi$ is a unital Schwarz map. This generalization of the Data Processing Inequality was made by Uhlmann in 1977 \cite{Uhlmann77}, and for 40 years it remained the state of the art until M\"uller-Hermes and Reeb  \cite{MHR17} proved the ultimate extension, namely that \eqref{DPI} is valid for all unital positive maps $\Phi$. 

Naturally, one may ask if \eqref{lieb22concave} holds for all positive maps $\Phi$. It turns out that the class of  Schwarz maps is optimal, and therefore the inequality \eqref{lieb22concave} characterizes Schwarz maps.

\begin{thm}\label{SMO} If a positive map $\Phi:M_n(\C)\to M_m(\C)$ satisfies \eqref{lieb22concave} for all $0\le p\le 1$, all $A,B \in M_m^{+}(\C)$ and all $K\in M_n(\C)$, then $\Phi$ is a Schwarz map. Thus, satisfaction of the inequality \eqref{lieb22concave} characterizes the class of  Schwarz maps.
\end{thm}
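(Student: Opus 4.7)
The plan is to show that the Schwarz inequality $\Phi(K^*K) \ge \Phi(K)^*\Phi(K)$ is hidden in \eqref{lieb22concave} as the endpoint $p = 0$, once one chooses $A$ to be positive definite so that $A^p$ degenerates to the identity.

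First I would specialize \eqref{lieb22concave} by taking $A = \one_m \in M_m^{++}(\C)$ and leaving $B \in M_m^+(\C)$ and $K \in M_n(\C)$ arbitrary. Since $\one_m^p = \one_m$ for every $p$, the hypothesis becomes
\[
\tr[\Phi(K)^* \Phi(K) B^{1-p}] \le \tr[K^* \Phi^\dagger(\one_m)^p K \Phi^\dagger(B)^{1-p}]
\]
for all $p \in (0,1]$. Next I would pass to the limit $p \to 0^+$. On the left, $B^{1-p} \to B$ in operator norm. On the right, $\Phi^\dagger(\one_m) \ge 0$ (because the adjoint of a positive map is positive), so $\Phi^\dagger(\one_m)^p$ converges in operator norm to the support projection $P$ of $\Phi^\dagger(\one_m)$, and $\Phi^\dagger(B)^{1-p} \to \Phi^\dagger(B)$. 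Therefore the inequality in the limit reads
\[
\tr[\Phi(K)^* \Phi(K) B] \le \tr[K^* P K \, \Phi^\dagger(B)].
\]

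To finish I would use the trivial bound $K^* P K \le K^* K$ (because $P \le \one_n$) together with $\Phi^\dagger(B) \ge 0$ (again since $\Phi^\dagger$ is positive), which yields
\[
\tr[K^* P K \,\Phi^\dagger(B)] \le \tr[K^* K \,\Phi^\dagger(B)] = \tr[\Phi(K^*K) B].
\]
Chaining the two displays gives
\[
\tr\bigl[(\Phi(K^*K) - \Phi(K)^*\Phi(K))\, B\bigr] \ge 0 \quad \text{for every } B \in M_m^+(\C),
\]
which by duality of the positive cone in $M_m(\C)$ is equivalent to $\Phi(K^*K) \ge \Phi(K)^*\Phi(K)$, proving that $\Phi$ is a Schwarz map.

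There is no serious obstacle in this argument; the only delicate point is that one must not mistakenly set $\Phi^\dagger(\one_m)^0 = \one_n$ when $\Phi^\dagger(\one_m)$ is singular. The operator-norm continuity of $X \mapsto X^p$ on $X \ge 0$ as $p \downarrow 0$ (with limit the support projection) handles this cleanly, and trading $P$ for $\one_n$ in the final step costs nothing because the inequality runs in the favorable direction.
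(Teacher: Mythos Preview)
Your proof is correct and follows essentially the same approach as the paper: both extract the Schwarz inequality from the $p=0$ endpoint of \eqref{lieb22concave}, observing that $\tr[\Phi(K)^*\Phi(K)B] \le \tr[\Phi(K^*K)B]$ for all $B \ge 0$ forces $\Phi(K^*K) \ge \Phi(K)^*\Phi(K)$. The paper simply sets $p=0$ directly (it lies in the hypothesis range $0 \le p \le 1$), whereas you approach it via $p \to 0^+$ and handle the possible singularity of $\Phi^\dagger(\one_m)$ with a support-projection step; this extra care is harmless but unnecessary.
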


\begin{proof}
Note that at the endpoint case, say $p=0$, \eqref{lieb22concave} becomes 
\begin{equation}\label{endpoint}
\tr [\Phi(K)^\ast  \Phi(K)B]\le \tr [K^\ast K \Phi^\dagger(B)] = \tr [\Phi (K^\ast K) B]\ .
\end{equation}
Since $B$ is any element in $M^+_m(\C)$, this implies the operator inequality \eqref{schwarz1},
showing that the class of Schwarz maps is the broadest class of positive linear maps for which  Theorem~\ref{L1M} is valid. 
\end{proof}

There is also a monotonicity extension of the Lieb Convexity Theorem, Theorem~\ref{L2}:

\begin{thm}\label{L2M}   For all $0 \leq t \leq 1$, all $m,n\in \N$, all 
$X,Y \in M_m^{++}(\C)$, all $K\in M_m(\C)$  and all  Schwarz maps $\Phi: M_n(\C)\to M_m(\C)$  such that $\Phi^\dagger(\one) \in M_n^{++}(\C)$, 
\begin{equation}\label{lieb22} 
 \tr[ \Phi^\dagger(K^*)\Phi^\dagger(Y)^{t-1}  \Phi^\dagger(K)\Phi^\dagger(X)^{-t}]  \leq   \tr[ K^*Y^{t-1}  KX^{-t}] \ .
\end{equation} 
\end{thm}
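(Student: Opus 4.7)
The plan is to derive Theorem~\ref{L2M} from Theorem~\ref{L1M} via a Legendre-type duality that recasts the Lieb-convexity trace functional as the extremum of a quadratic form corrected by a linear term. The key observation is the variational identity
\[
\tr[K^* Y^{t-1} K X^{-t}] \;=\; \sup_{B \in M_m(\C)}\bigl\{\, 2\,\mathrm{Re}\,\tr[K^* B] \,-\, \tr[X^t B^* Y^{1-t} B]\,\bigr\},
\]
valid for $X, Y \in M_m^{++}(\C)$, $K \in M_m(\C)$, and $0 \le t \le 1$. Since $X^t$ and $Y^{1-t}$ are positive definite, the quadratic form is strictly positive and the supremum is attained uniquely at the critical point $B = Y^{t-1} K X^{-t}$; substituting back reproduces $\tr[K^* Y^{t-1} K X^{-t}]$. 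Because the hypothesis $\Phi^\dagger(\one) \in M_n^{++}(\C)$ combined with $X, Y \ge \eps \one$ for some $\eps > 0$ yields $\Phi^\dagger(X), \Phi^\dagger(Y) \ge \eps\, \Phi^\dagger(\one) \in M_n^{++}(\C)$, the same identity applies on the $\Phi^\dagger$-side,
\[
\tr[\Phi^\dagger(K^*)\Phi^\dagger(Y)^{t-1}\Phi^\dagger(K)\Phi^\dagger(X)^{-t}] \;=\; \sup_{A \in M_n(\C)}\bigl\{\, 2\,\mathrm{Re}\,\tr[\Phi^\dagger(K)^* A] \,-\, \tr[\Phi^\dagger(X)^t A^* \Phi^\dagger(Y)^{1-t} A]\,\bigr\},
\]
where I use that $\Phi$ is positive, hence Hermiticity-preserving, so that $\Phi^\dagger(K^*) = \Phi^\dagger(K)^*$.

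Next I would restrict the first supremum to the affine subspace $B = \Phi(A)$ with $A \in M_n(\C)$ and compare term by term. The linear terms coincide by definition of the adjoint: $\tr[K^* \Phi(A)] = \tr[\Phi^\dagger(K)^* A]$. For the quadratic terms, Theorem~\ref{L1M} applied with its parameters $(K,A,B,p)$ replaced by $(A, Y, X, 1-t)$ gives
\[
\tr[\Phi(A)^* Y^{1-t} \Phi(A) X^t] \;\leq\; \tr[A^* \Phi^\dagger(Y)^{1-t} A \Phi^\dagger(X)^t],
\]
so the subtracted quadratic term on the $B = \Phi(A)$ side is no larger than the corresponding term on the $\Phi^\dagger$-side. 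Hence for every $A$ the bracketed expression on the left dominates the bracketed expression on the right; taking $\sup_A$ of both and then enlarging to $\sup_B$ on the left yields exactly the monotonicity inequality of Theorem~\ref{L2M}.

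The main obstacle is bookkeeping rather than conceptual: the factor orderings inside the variational identity must be tracked with care, so that the correct quadratic form appears---namely $\tr[X^t B^* Y^{1-t} B]$, with $X^t$ on the outside and $Y^{1-t}$ between $B^*$ and $B$, rather than the swapped version---in order for its Legendre dual to produce $Y^{t-1}$ between $K^*$ and $K$ as demanded by the statement. Once this is arranged, Theorem~\ref{L1M} slots in with matching factor positions and the argument closes without further difficulty.
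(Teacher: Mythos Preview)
Your proof is correct. The variational identity is the Legendre dual of the positive-definite quadratic form $B\mapsto \tr[X^t B^* Y^{1-t}B]$, and restricting to $B=\Phi(A)$ together with Theorem~\ref{L1M} (applied with $p=1-t$ and the roles of $A,B,K$ there taken by $Y,X,A$) closes the argument exactly as you describe. You use only that $\Phi$ is Schwarz (for Theorem~\ref{L1M}) and positive (so that $\Phi^\dagger$ is Hermiticity-preserving and $\Phi^\dagger(X),\Phi^\dagger(Y)\in M_n^{++}(\C)$ follows from $\Phi^\dagger(\one)>0$); in particular no unitality is needed.

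This is a genuinely different route from the paper's own proof. The paper establishes Theorem~\ref{L2M} directly by complex interpolation: it constructs an analytic function $g(z)$ on the strip $\{0<\Re z<1\}$ whose value at $z=t$ is the left side of \eqref{lieb22}, and bounds $|g|$ on the boundary lines by $\tr[M^*M]$ (with $M=Y^{(t-1)/2}KX^{-t/2}$) using the Schwarz inequality and the tracial inequality \eqref{tracial}, then invokes the three-lines lemma. Your argument instead reduces Theorem~\ref{L2M} to Theorem~\ref{L1M} via a quadratic Legendre transform --- essentially Lieb's original passage from Theorem~\ref{L1} to Theorem~\ref{L2}, lifted to the monotonicity setting. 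The paper remarks (just after the statement of Theorem~\ref{L2M}) that such an equivalence is available through \cite[Lemma~1]{HP12quasi} but only under the additional hypothesis that $\Phi$ is unital; your version dispenses with that restriction and delivers the full statement. The interpolation proof has the virtue of being self-contained, not presupposing Theorem~\ref{L1M}; yours is shorter and makes the equivalence of the two monotonicity theorems transparent in the full Schwarz generality.
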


For unital completely positive maps, this follows from the Lieb Convexity Theorem,  Theorem~\ref{L2},  and the Stinespring Dilation Theorem \cite{S55}, 
as explained in \cite[Section 3] {Carlen22review}. However, the first explicit statement appeared only in 1996, when Petz proved it directly for unital $2$-positive maps, \cite{P96},  a class that includes unital completely positive maps, but is strictly included in the class of unital Schwarz 
maps as 
explained below.  By the very simple Lemma 1 of  \cite{HP12quasi}, Theorem~\ref{L1M} is equivalent to Theorem~\ref{L2M}, and this together with Uhlmann's proof of Theorem~\ref{L1M} proves Theorem~\ref{L2M} with the additional condition that $\Phi$ is unital.    Lieb \cite{Lieb73WYD} proved the equivalence of Theorem~\ref{L1} and  Theorem~\ref{L2}, but the proof of equivalence is simpler in the monotonicity formulation. Theorem~\ref{L2M}  has a significant physical interpretation in terms of {\em monotone metrics}; 
see \cite[Section 8] {Carlen22review}  for more discussion. 

Again, the class of Schwarz maps is optimal,  and therefore the inequality \eqref{lieb22} characterizes Schwarz maps:

\begin{thm}\label{SMO2} If a positive map $\Phi:M_n(\C)\to M_m(\C)$ satisfies \eqref{lieb22} for all $0\le t\le 1$, all $X,Y \in M_m^{+}(\C)$ and all $K\in M_n(\C)$, then $\Phi$ is a Schwarz map.   Thus, satisfaction of the inequality \eqref{lieb22} characterizes the class of  Schwarz maps.
\end{thm}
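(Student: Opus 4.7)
The plan is to proceed in parallel with the proof of Theorem~\ref{SMO}, though an additional ingredient will be required at the end. First, I set $t=1$ in \eqref{lieb22}. Under the standing hypothesis that $\Phi^\dagger(\one)\in M_n^{++}(\C)$ (so that $\Phi^\dagger(Y)\in M_n^{++}(\C)$ for every $Y\in M_m^{++}(\C)$), we have $\Phi^\dagger(Y)^{0}=\one_n$, and \eqref{lieb22} collapses to
\[
\tr[\Phi^\dagger(K^*)\Phi^\dagger(K)\Phi^\dagger(X)^{-1}]\;\le\;\tr[K^*KX^{-1}]
\]
for all $K\in M_m(\C)$ and $X\in M_m^{++}(\C)$. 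Applying the duality $\tr[A\Phi^\dagger(B)]=\tr[\Phi(A)B]$ on the left-hand side with $A=\Phi^\dagger(K^*)\Phi^\dagger(K)\in M_n(\C)$, using that $\Phi^\dagger(K^*)=\Phi^\dagger(K)^*$ (because the positive map $\Phi^\dagger$ preserves adjoints), and letting $X^{-1}$ range over all of $M_m^{++}(\C)$, one deduces the operator inequality
\[
\Phi\bigl(\Phi^\dagger(K)^*\Phi^\dagger(K)\bigr)\;\le\;K^*K, \qquad K\in M_m(\C).
\]

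The main obstacle is that this inequality alone does \emph{not} imply the Schwarz inequality $\Phi(L^*L)\ge \Phi(L)^*\Phi(L)$ for arbitrary $L\in M_n(\C)$. For instance, for the conditional expectation $\Phi(X)=k^{-1}\tr_2(X)$ on $M_k\otimes M_k$ one obtains only the trivial bound $k^{-2}K^*K\le K^*K$ for $K\in M_k(\C)$, whereas the Schwarz inequality on the much larger space $M_{k^2}(\C)$ is a genuine nontrivial statement in that case. Consequently one must use the full content of \eqref{lieb22} for interior values $t\in(0,1)$, not merely at the endpoint, in order to recover the Schwarz property.

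To exploit the interior values, I would invoke Lemma~1 of \cite{HP12quasi}, the substitution argument cited in the main text as the basis for the equivalence of Theorem~\ref{L1M} and Theorem~\ref{L2M}. This lemma operates at the level of an individual positive map: for $\Phi\colon M_n(\C)\to M_m(\C)$ positive with $\Phi^\dagger(\one)\in M_n^{++}(\C)$, satisfaction of \eqref{lieb22} for all admissible $(t,X,Y,K)$ is equivalent to satisfaction of \eqref{lieb22concave} for all admissible $(p,A,B,K)$. Hence the hypothesis of the theorem implies that $\Phi$ also satisfies \eqref{lieb22concave} for all $0\le p\le 1$, $A,B\in M_m^+(\C)$, and $K\in M_n(\C)$. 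Theorem~\ref{SMO}, established just above, then yields that $\Phi$ is a Schwarz map.
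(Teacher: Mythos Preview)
Your detour through \cite[Lemma~1]{HP12quasi} is unnecessary, and your reason for taking it rests on a mistake. In the first paragraph you obtain the correct endpoint inequality at $t=1$, but then apply duality as though the left side were $\tr[A\,\Phi^\dagger(X^{-1})]$; it is $\tr[A\,\Phi^\dagger(X)^{-1}]$, and since $\Phi^\dagger(X)^{-1}\neq\Phi^\dagger(X^{-1})$ in general, the operator inequality $\Phi(\Phi^\dagger(K)^*\Phi^\dagger(K))\le K^*K$ does not follow. More importantly, you then dismiss the endpoint as insufficient. It is not: rewriting your $t=1$ inequality via cyclicity of the trace and replacing $K$ by $K^*$ gives precisely
\[
\tr\bigl[\Phi^\dagger(K^*)\,\Phi^\dagger(Y)^{-1}\,\Phi^\dagger(K)\bigr]\le \tr\bigl[K^*Y^{-1}K\bigr],
\]
which is the inequality \eqref{tracialA} obtained in the paper at $t=0$. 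By \cite[Theorem~4]{CMH22} this tracial inequality alone characterizes Schwarz maps, and that is the entirety of the paper's proof.

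Your alternative route---pass from \eqref{lieb22} to \eqref{lieb22concave} via \cite[Lemma~1]{HP12quasi} and then invoke Theorem~\ref{SMO}---is a reasonable strategy, but you assert without justification that the equivalence in that lemma operates map-by-map for any positive $\Phi$ with $\Phi^\dagger(\one)\in M_n^{++}(\C)$. The paper itself only records the equivalence (and uses it) under a unitality hypothesis. So even if this route can be made to work, it needs more care than you give it, whereas the endpoint $t=0$ together with \cite{CMH22} settles the matter in one line.
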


\begin{proof}In the endpoint $t=0$ case, \eqref{lieb22} becomes
\begin{equation}\label{tracialA}
\tr\left[\Phi^\dagger(K^\ast)\Phi^\dagger(Y)^{-1}\Phi^\dagger(K)\right]\le \tr[ K^*Y^{-1} K] \ .
\end{equation}
By a recent result \cite[Theorem 4]{CMH22}, the inequality \eqref{tracialA}  characterizes Schwarz maps. 
\end{proof}

\begin{remark}
	That $\Phi$ is Schwarz is equivalent to \eqref{tracialA} was proved recently in \cite{CMH22}. In a preliminary version of this paper, we also obtained this characterization, in the class of unital maps,  using results in an earlier version of \cite{CMH22} and the work of Hiai and Petz \cite{HP12quasi}. Although the argument made use of an additional hypothesis, namely unitality, the argument may still be of interest, and so we sketch it here.
	
	The method of Hiai and Petz in \cite{HP12quasi} uses the integral representation for operator monotone increasing functions $f$: 
	$$f(x) :=  \beta + \gamma x + \int_{(0,\infty)}   \frac{x}{\lambda + x}  (1 + \lambda)   {\rm d}\mu(\lambda)\ ,$$
	where $\mu$ is a finite Borel measure on $(0,\infty)$. The key observation underlying our characterization of unital Schwarz maps is that in the integral representation of 
	$$
	x^r =  \int_{(0,\infty)}   \frac{x}{\lambda + x}  (1 + \lambda)   {\rm d}\mu(\lambda)\   \quad{\rm where}\quad {\rm d}\mu(\lambda) = \frac{\sin(\pi r)}{\pi} \frac{\lambda^{r-1}}{1+\lambda}{\rm d}\lambda\ ,
	$$
	with $0<r<1$, one does not have the affine term, i.e. $\beta = \gamma = 0$.  In the argument of Hiai and Petz, these terms were treated using the Schwarz inequality.
	The remaining  integral term only required a {\em tracial inequality} equivalent to \eqref{tracialA} when $\Phi$ is unital. This tracial inequality was shown in an earlier version \cite{CMH22} to be valid whenever $\Phi$ is a unital Schwarz map. On the other hand, we observed that when the tracial inequality holds, the method of Hiai and Petz yields \eqref{lieb22concave} without assuming that $\Phi$ is a Schwarz map since the affine terms are not present. But then by Theorem~\ref{SMO}, $\Phi$ is a Schwarz map. 
\end{remark}

A final advantage of the monotonicity formulations is that they are often easier to directly prove than the corresponding convexity or 
concavity theorems, which one then readily recovers with a simple partial trace argument as we demonstrate below. Indeed, 
a very simple proof of Theorems~\ref{L1M} and~\ref{L2M} was recently given in \cite{CMH22}, building on previous work of Hiai and Petz \cite{HP12quasi}, 
extending their argument to the wider generality of Schwarz maps. Later in the paper, we give a new proof of 
Theorem~\ref{L1M} that is modeled on Lieb's original proof of Theorem~\ref{L1} by interpolation.  
The interpolation proof of the monotonicity result, which is different from Uhlmann's interpolation argument \cite{Uhlmann77}, seems especially simple to us. We also use interpolation to give another proof of Theorem~\ref{L2M}.

The main theme of this paper is to obtain new monotonicity theorems from Theorems~\ref{L1M} and \ref{L2M}, among others, by a duality technique 
involving Legendre transforms or variations upon them. This is a technique that both authors have successfully applied to prove new convexity and concavity results. One of us recently used this approach  to prove a monotonicity version of Lieb's theorem \cite[Theorem 6]{Lieb73WYD} stating that  for all self-adjoint $H\in M_n(\C)$, 
\begin{equation}\label{Lcon2}
X \mapsto \tr[ \exp(H + \log X)]
\end{equation}
is concave on $M_n^{++}(\C)$; see \cite{Carlen22lieb}.

Our first new result is a monotonicity version of a theorem of Epstein \cite{Epstein73}:

\begin{thm}\label{Ep1}  Let $0 < p \leq 1$, and let $\Phi: M_m(\C) \to M_n(\C)$ be a unital Schwarz map.  Then for any 
$A\in M_{n}^{++}(\C)$ and any $B\in M_m(\C)$,
\begin{equation}\label{EpMon}
 \tr[ (\Phi(B)^*A^p \Phi(B))^{1/p}]  \leq   \tr[ (B^*\Phi^\dagger(A)^p B)^{1/p}]  \ .
\end{equation}
\end{thm}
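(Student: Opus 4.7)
The plan is to combine a variational representation of $T \mapsto \tr[T^{1/p}]$ as a supremum of linear functionals with Theorem~\ref{L1M}, using the unitality of $\Phi$ as the bridge between the two applications of that representation. Throughout the main argument I take $0 < p < 1$; the endpoint $p = 1$ is handled separately at the end.

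The first step is to establish the variational identity
\begin{equation}\label{varrep}
(\tr[T^{1/p}])^p = \sup\bigl\{\tr[T C^{1-p}] : C \in M_k^{+}(\C),\ \tr[C] \leq 1\bigr\}
\end{equation}
for any $T \in M_k^{+}(\C)$. The inequality $\geq$ follows from the trace H\"older inequality with conjugate exponents $1/p$ and $1/(1-p)$, which yields $\tr[T C^{1-p}] \leq (\tr[T^{1/p}])^p (\tr[C])^{1-p} \leq (\tr[T^{1/p}])^p$. Equality (hence $\leq$) is attained at $C = T^{1/p}/\tr[T^{1/p}]$ when $T \neq 0$.

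Next I would apply \eqref{varrep} with $T = \Phi(B)^* A^p \Phi(B) \in M_n^{+}(\C)$ and invoke Theorem~\ref{L1M} pointwise inside the supremum. That theorem, applied with $K = B$ and with $C$ in the role of the second positive operator, gives
$$\tr[\Phi(B)^* A^p \Phi(B)\, C^{1-p}] \leq \tr[B^* \Phi^\dagger(A)^p B\, \Phi^\dagger(C)^{1-p}].$$
Setting $D := \Phi^\dagger(C)$, positivity of $\Phi$ (hence of $\Phi^\dagger$) gives $D \geq 0$, and unitality of $\Phi$ gives $\tr[D] = \tr[\Phi^\dagger(C)\, \one] = \tr[C\, \Phi(\one)] = \tr[C] \leq 1$. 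Thus $D$ is feasible in the variational representation of $(\tr[(B^*\Phi^\dagger(A)^p B)^{1/p}])^p$; enlarging the supremum accordingly and invoking \eqref{varrep} once more yields
$$(\tr[(\Phi(B)^* A^p \Phi(B))^{1/p}])^p \leq (\tr[(B^* \Phi^\dagger(A)^p B)^{1/p}])^p,$$
and taking $p$-th roots gives \eqref{EpMon}. For the remaining endpoint $p = 1$, the inequality reduces to $\Phi(B)\Phi(B)^* \leq \Phi(BB^*)$, which is the Schwarz inequality \eqref{schwarz1} applied to $K = B^*$ (using that positive maps preserve adjoints).

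The main obstacle is the clean formulation of the variational identity \eqref{varrep}; once that is in place, the rest of the argument is two dualizations separated by a single application of Theorem~\ref{L1M}. Unitality of $\Phi$ enters only in the transport step $D = \Phi^\dagger(C)$, and it is essential: without unitality, $\Phi^\dagger$ need not preserve the trace-normalization constraint, and the loop between the two applications of \eqref{varrep} could not be closed.
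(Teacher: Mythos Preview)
Your proposal is correct and is essentially the paper's first proof of Theorem~\ref{Ep1}: the variational identity \eqref{varrep} is precisely \eqref{maxlem} after the substitution $Y = C^{1-p}$ (with the harmless relaxation $\tr[C]\leq 1$ in place of $\tr[C]=1$), the pointwise application of Theorem~\ref{L1M} and the enlargement of the feasible set via the trace-preservation of $\Phi^\dagger$ are exactly the steps \eqref{dv1}--\eqref{dv2}, and your treatment of the endpoint $p=1$ matches the paper's.
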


Below we give two proofs of this theorem, each of which lends itself to different generalizations. First consider the special case in which $n=2m$ and 
\begin{equation}\label{embed}\Phi(X) = \left[\begin{array}{cc} X & 0 \\ 0 & X\end{array}\right] \quad{\rm so\ that}\quad \Phi^\dagger\left(\left[\begin{array}{cc} A & B\\C & D\end{array}\right]   \right)  = A+D\ .
\end{equation}
Clearly $\Phi$ is unital completely positive and hence Schwarz, as explained below. 
Now consider $A_1,A_2\in M_{m}^{+}(\C)$, $B\in M_m(\C)$ and define $A := \left[\begin{array}{cc} A_1 & 0 \\ 0 & A_2\end{array}\right]$. 
Then 
$$
 \tr[ (\Phi(B)^*A^p \Phi(B))^{1/p}]  = \tr\left[ \begin{array}{cc} (B^*A_1^pB)^{1/p} & 0 \\ 0 & (B^*A_2^p B)^{1/p} \end{array}\right] = \tr[ (B^*A_1^pB)^{1/p}] + 
 \tr[ (B^*A_2^pB)^{1/p}] \ ,
$$
and 
$$
\tr[ (B^*\Phi^\dagger(A)^p B)^{1/p}]   = \tr[(B^*(A_1+A_2)^pB)^{1/p}]\ .
$$
Thus, by homogeneity,
$$
\frac12 \tr[ (B^*A_1^pB)^{1/p}] + \frac12  \tr[ (B^*A_2^pB)^{1/p}]  \leq  \tr\left[\left(B^*\left(\frac{A_1+A_2}{2}\right)^pB\right)^{1/p}\right]\ .
$$
Together with the obvious continuity, this proves:

\begin{cl}[Epstein's Theorem \cite{Epstein73}] For any $B\in M_n(\C)$ and all $0 \leq  p \leq  1$, the map
\begin{equation}\label{epcon}
A \mapsto \tr[ (B^* A^p B)^{1/p}]
\end{equation}
is concave on $M_n^{+}(\C)$. 
\end{cl}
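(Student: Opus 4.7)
The plan is to extract the corollary directly from the monotonicity statement of Theorem~\ref{Ep1}, using exactly the block-diagonal amplification $\Phi\colon M_m(\C) \to M_{2m}(\C)$ defined in \eqref{embed}. First I would verify that this $\Phi$ is a unital Schwarz map so that Theorem~\ref{Ep1} applies. In fact $\Phi(X) = X \otimes \one_2$ is a $*$-homomorphism, hence unital and completely positive, and its Hilbert--Schmidt adjoint is the partial trace over the auxiliary $2$-dimensional factor, giving $\Phi^\dagger(\mathrm{diag}(A_1,A_2)) = A_1 + A_2$, as recorded in \eqref{embed}.

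Next I would specialize \eqref{EpMon} to $A = \mathrm{diag}(A_1, A_2)$ with $A_1, A_2 \in M_m^{++}(\C)$ and arbitrary $B \in M_m(\C)$. Because $A$ is block-diagonal, so is $A^p$ for any $p$, and since $\Phi(B) = B \oplus B$ is block-diagonal as well, the matrix $\Phi(B)^\ast A^p \Phi(B)$ has block-diagonal entries $B^\ast A_1^p B$ and $B^\ast A_2^p B$. Taking the $(1/p)$-th power and the trace therefore splits the left-hand side of \eqref{EpMon} into the sum $\tr[(B^\ast A_1^p B)^{1/p}] + \tr[(B^\ast A_2^p B)^{1/p}]$, while the right-hand side equals $\tr[(B^\ast (A_1+A_2)^p B)^{1/p}]$. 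The functional $A \mapsto \tr[(B^\ast A^p B)^{1/p}]$ is homogeneous of degree $1$ in $A$, so rescaling both sides by $1/2$ produces the midpoint concavity inequality displayed just above the corollary.

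The final step is to promote midpoint concavity to full concavity by continuity of $A \mapsto \tr[(B^\ast A^p B)^{1/p}]$ on $M_m^{++}(\C)$, and then extend from the open cone $M_m^{++}(\C)$ to the closed cone $M_m^{+}(\C)$ by continuity in $A$. The only real obstacle, which is minor, is verifying continuity on the boundary: one uses that $A \mapsto A^p$ is continuous on $M_m^{+}(\C)$ for $0 \leq p \leq 1$ (indeed it is operator monotone), whence $B^\ast A^p B$ depends continuously on $A$, and then $X \mapsto \tr[X^{1/p}]$ is continuous on $M_m^{+}(\C)$ since $x \mapsto x^{1/p}$ is continuous on $[0,\infty)$. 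Strict positivity of $A_1, A_2$ was only invoked to apply Theorem~\ref{Ep1}; density of $M_m^{++}(\C)$ in $M_m^{+}(\C)$ closes the argument.
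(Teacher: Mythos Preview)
Your proposal is correct and follows essentially the same approach as the paper: apply Theorem~\ref{Ep1} with the block-diagonal embedding $\Phi$ of \eqref{embed} to $A=\mathrm{diag}(A_1,A_2)$, compute both sides, use homogeneity to get midpoint concavity, and conclude by continuity. If anything, you are slightly more careful than the paper in noting that Theorem~\ref{Ep1} requires $A\in M_n^{++}(\C)$ and then extending to $M_n^{+}(\C)$ by density and continuity.
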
 

\begin{remark}  In the endpoint case $p=1$, \eqref{EpMon} reduces to $\tr[ \Phi(B)\Phi(B)^*A ]  \leq   \tr[ BB^*\Phi^\dagger(A) ]$, and again since $A\in M_n^{+}$ is arbitrary, $\Phi$ satisfies \eqref{schwarz1}. Hence the validity of \eqref{EpMon} for $0 <  p \leq 1$ is characteristic of the class of Schwarz maps, which is the broadest class of unital positive linear maps for which Theorem~\ref{Ep1} is valid. 
\end{remark}

\subsection*{Acknowledgement} We are grateful to the anonymous referee for pointing out a number of typos and helpful comments.

\section{Positivity and duality}  

In this section, we recall some different notions of positivity, and some duality lemmas we shall use in later sections. 

Consider a linear map $\Phi: M_n(\C) \to M_m(\C)$. We identify $M_n(\C)\otimes M_2(\C)$ with the $2\times 2$ block matrices
$\left[\begin{array}{cc} A & B\\C & D\end{array}\right]$ and then 
\begin{equation}\label{2pos}
\Phi\otimes \id\left(   \left[\begin{array}{cc} A & B\\C & D\end{array}\right]   \right) = \left[\begin{array}{cc} \Phi(A) & \Phi(B)\\\Phi(C) & \Phi(D)\end{array}\right]\ .
\end{equation}
 The map $\Phi$ is defined to be $2$-{\em positive} in case the matrix on the right in \eqref{2pos} belongs to $M_{2m}^{+}(\C)$ whenever
$\left[\begin{array}{cc} A & B\\C & D\end{array}\right] \in M_{2n}^{+}(\C)$. The notion of {\em $k$-positivity} is defined in 
the analogous way for all positive integers $k\geq2$, and the map $\Phi$ is {\em completely positive}  in case it is  $k$-positive for all $k$. 
By a theorem of Choi \cite{Choi74}, 
which generalizes an earlier result of Kadison \cite{K52},  whenever $\Phi$ is a unital $2$-positive map, for all $X\in M_n(\C)$,
\begin{equation}\label{choiS}
\Phi(X^*X) \geq \Phi(X)^*\Phi(X)\ .
\end{equation}
A {\em Schwarz map} is a linear map satisfying \eqref{choiS}. Evidently, every Schwarz map is positive. 
Choi also showed \cite{Choi80} that while every $2$-positive map is a Schwarz map, the converse is not true:  There are Schwarz maps that are not $2$-positive. 
The notion of a {\em generalized Schwarz map} may be found in \cite{CMH22}: The map $\Phi: M_n(\C) \to M_m(\C)$, not necessarily unital,  is a {\em generalized Schwarz map} in case
\begin{equation}\label{genschw}
\left[ \begin{array}{cc} \Phi(\one) & \Phi(X) \\ \Phi(X)^* &\Phi(X^*X)\end{array}\right] \geq 0\ .
\end{equation}
Using the well-known fact that  for $A \in M_n^{+}(\C)$, with $A^+$ denoting its Moore--Penrose generalized inverse,
\begin{equation}\label{pchar}
\left[\begin{array}{cc} A & Z\\Z^\ast & B\end{array}\right]  \geq 0  \iff B \geq Z^*A^+Z\ ,
\end{equation}
one sees, as noted in  \cite{CMH22} that every $2$-positive map is a generalized Schwarz map, and that a unital map is Schwarz if and only if it is generalized Schwarz. Thus we have the following hierarchy of classes of positive maps:
\begin{equation}\label{positmaps}
{\rm Completely\ Positive} \subset {\rm 2-Positive} \subset {\rm Generalized\ Schwarz} \subset {\rm Positive}\ .
\end{equation}

We shall also use some  duality lemmas.  The first of these depends on the reverse H\"older inequality for traces that we briefly recall for completeness; see \cite{Ch21} and \cite[Appendix]{HMPB11rmp} for a more general result without the positivity condition that meets our needs here. 
Let $X,Y\in M_n^{++}(\C)$.
We take the trace of $XY$ using a basis in which $X_{i,j} = \delta_{i,j}\lambda_j$ and $Y_{j,i} =\sum_{k} U_{j,k}\mu_k \overline{U_{i,k}}$.  Then
$$
\tr[XY] = \sum_{j,k} \lambda_j|U_{j,k}|^2 \mu_k\ .
$$
 The matrix $S$ with $S_{j,k}:= |U_{j,k}|^2$ is doubly stochastic, and hence by Birkhoff's Theorem \cite{B46}, $S$ is a convex combination of permutation matrices
$P^{(\pi)}$, $\pi\in {\mathcal S}_n$ with ${\mathcal S}_n$ the symmetric group on $n$ letters. 
Hence $\tr[XY]$ is a convex combination of sums of the form
$\sum_{j=1}^n \lambda_j \mu_{\pi(j)}$, $\pi \in {\mathcal S}_n$.
To each of these we may apply the classical reverse H\"older inequality to conclude that for $0< r < 1$ or $-\infty < r < 0$,
$$
\sum_{j=1}^n \lambda_j \mu_{\pi(j)}  \geq \left( \sum_{j=1}^n  \lambda_j^r  \right)^{1/r} \left(  \sum_{j=1}^n \mu_{\pi(j)}^{r/(r-1)} \right)^{(r-1)/r} = 
\left(\tr[X^r]\right)^{1/r}\left(\tr[Y^{r/(r-1)}]\right)^{(r-1)/r}\ .
$$
We conclude that  for $X,Y\in M_n^{++}(\C)$:  If $0< r < 1$ or $-\infty < r < 0$, then
\begin{equation}\label{reverseHold}
\tr[XY] \geq  \left(\tr [X^r]\right)^{1/r}  \left(\tr [Y^{r/(r-1)}]\right)^{(r-1)/r}\ .
\end{equation}

\begin{lm}Let $X\in M_n^{+}(\C)$. Then for $1 <  r  < \infty$, 
\begin{equation}\label{maxlem} \left(\tr [X^r]\right)^{1/r} = \max\left\{ \tr[XY]\ :\ Y\in M_n^{+}(\C)\ ,\quad \tr[Y^{r/(r-1)}] =1\ \right\} \ ,
\end{equation}
while for $0 < r < 1 $ or $-\infty < r < 0$,  and $X\in M_n^{++}(\C)$,
\begin{equation}\label{minlem1} \left(\tr [X^r]\right)^{1/r} = \min\left\{ \tr[XY]\ :\ Y\in M_n^{++}(\C)\ ,\quad \tr[Y^{r/(r-1)}] =1\ \right\} \ .
\end{equation}
\end{lm}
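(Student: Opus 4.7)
The plan is to establish each equality by combining a H\"older-type inequality with an explicit extremizer. For \eqref{maxlem} with $r>1$, I would invoke the classical trace H\"older inequality
\[
\tr[XY] \leq (\tr[X^r])^{1/r}(\tr[Y^{r/(r-1)}])^{(r-1)/r},
\]
valid for $X,Y\in M_n^{+}(\C)$. This can be obtained by exactly the Birkhoff's Theorem argument used above for \eqref{reverseHold}, replacing the classical reverse H\"older inequality for sequences by the ordinary one. The constraint $\tr[Y^{r/(r-1)}]=1$ then gives $\tr[XY] \leq (\tr[X^r])^{1/r}$. For \eqref{minlem1}, the inequality \eqref{reverseHold} derived immediately above directly supplies the corresponding lower bound $\tr[XY] \geq (\tr[X^r])^{1/r}$ on admissible $Y$.

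For attainment, in both cases I would propose the single extremizer
\[
Y_0 := \frac{X^{r-1}}{(\tr[X^r])^{(r-1)/r}},
\]
assuming $X\neq 0$ in \eqref{maxlem} (the case $X=0$ being trivial). A short calculation shows $Y_0^{r/(r-1)} = X^r/\tr[X^r]$, hence $\tr[Y_0^{r/(r-1)}] = 1$, and
\[
\tr[XY_0] = \frac{\tr[X^r]}{(\tr[X^r])^{(r-1)/r}} = (\tr[X^r])^{1/r},
\]
so the max/min is realized at $Y_0$.

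The only thing to verify is that $Y_0$ is admissible in each regime. For $r>1$ and $X\in M_n^{+}(\C)$, the positive power $X^{r-1}$ is well-defined and lies in $M_n^{+}(\C)$. For $0<r<1$ or $r<0$, one has $r-1<0$, so the hypothesis $X\in M_n^{++}(\C)$ in \eqref{minlem1} is precisely what is needed to make $X^{r-1}$, and hence $Y_0$, well-defined elements of $M_n^{++}(\C)$. I do not expect any serious obstacle beyond the bookkeeping of exponents; the argument is short once \eqref{reverseHold} is in hand, and the appearance of the same extremizer $Y_0$ in all three regimes reflects the common Young-inequality structure underlying both H\"older and reverse H\"older.
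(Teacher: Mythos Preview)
Your proposal is correct and essentially identical to the paper's proof: the paper uses the very same extremizer $Y = (\tr[X^r])^{(1-r)/r}X^{r-1}$ together with the trace H\"older inequality for \eqref{maxlem} and the reverse H\"older inequality \eqref{reverseHold} for \eqref{minlem1}. Your additional remarks on admissibility of $Y_0$ in each regime and on the trivial $X=0$ case are welcome clarifications but do not alter the approach.
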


\begin{proof} 
First, consider $1 < r < \infty $.     For $Y := \left(\tr [X^r]\right)^{(1-r)/r} X^{r-1}$, we have 
\begin{equation*}
\tr[XY] =   \left(\tr [X^r]\right)^{1/r}\qquad{\rm and}\qquad  \quad \tr[Y^{r/(r-1)}] =1 \ .
\end{equation*}
Then by H\"older's inequality for traces \cite[Section 7.1]{Carlen10notes}, 
$$\tr[XY] \leq  \left(\tr [X^r]\right)^{1/r}  \left(\tr [Y^{r/(r-1)}]\right)^{(r-1)/r},$$
and this proves \eqref{maxlem}.

Next, suppose $X\in M_n^{++}(\C)$, and  $0 < r < 1$  or $-\infty < r < 0$ .  Again with  $Y := \left(\tr [X^r]\right)^{(1-r)/r} X^{r-1}$, we have
\begin{equation}\label{oneside}
\tr[XY] =   \left(\tr [X^r]\right)^{1/r} \qquad{\rm and}\qquad  \quad \tr[Y^{r/(r-1)}] =1\ .
\end{equation}
Now the reverse H\"older inequality \eqref{reverseHold}
together with the example in \eqref{oneside} proves \eqref{minlem1}.
\end{proof}

\section{Proofs of  Theorem~\ref{Ep1} and related inequalities}

\subsection{Proofs of  Theorem~\ref{Ep1}}
\begin{proof}[First proof of Theorem~\ref{Ep1}] When $p=1$, the inequality follows directly from the assumption that $\Phi$ is Schwarz. Now let $0<p<1$ and put $r:= 1/p > 1$, so that
$$
( \tr[ (\Phi(B)^*A^p \Phi(B))^{1/p}])^p =  \left(\tr[(\Phi(B)^*A^p \Phi(B))^r]\right)^{1/r}\ .
$$ 
  Then by \eqref{maxlem}
\begin{eqnarray}\label{dv1}
  \left(\tr[(\Phi(B)^*A^p \Phi(B))^r]\right)^{1/r}
   &=& \max\left\{ \tr[\Phi(B)^*A^p \Phi(B)Y]\ :\ Y\in M_n^{+}(\C)\ ,\quad \tr[Y^{r/(r-1)}] =1\ \right\} \nonumber\\
 &=& \max\left\{ \tr[\Phi(B)^*A^p \Phi(B)Y]\ :\ Y\in M_n^{+}(\C)\ ,\quad \tr[Y^{1/(1-p)}] =1\ \right\} \nonumber\\
  &=& \max\left\{ \tr[\Phi(B)^*A^p \Phi(B)Y^{1-p}]\ :\ Y\in M_n^{+}(\C)\ ,\quad \tr[Y] =1\ \right\}
\end{eqnarray}
By  Theorem~\ref{L1M}, 
$ \tr[\Phi(B)^*A^p\Phi(B)Y^{1-p}]  \leq  \tr[(B^*\Phi^\dagger(A)^pB\Phi^\dagger(Y)^{1-p}]$.
Therefore,
\begin{eqnarray}\label{dv2}
(\tr[ (\Phi(B)^*A^p \Phi(B))^{1/p}])^p &\leq&  \max\left\{ \tr\left[B^*\Phi^\dagger(A)^pB\Phi^\dagger(Y)^{1-p}\right]\ :\ Y\in M_n^{+}(\C)\ ,\quad \tr[Y] =1\ \right\}\nonumber\\
&\leq&  \max\left\{ \tr\left[B^*\Phi^\dagger(A)^p BZ^{1-p}\right]\ :\ Z\in M_m^{+}(\C)\ ,\quad \tr[Z] =1\ \right\}\nonumber\\
&=&  \max\left\{ \tr\left[B^*\Phi^\dagger(A)^p BZ\right]\ :\ Z\in M_m^{+}(\C)\ ,\quad \tr[Z^{1/(1-p)}] =1\ \right\}\nonumber\\
&=& (\tr[ (B^*\Phi^\dagger(A)^p B)^{1/p}])^p  \ ,
\end{eqnarray}
where in the second inequality we have used the fact that $\Phi^\dagger$ is positive trace preserving and the definition of $\max$, and in the last line we have used \eqref{maxlem}  once again. Now take the $p$-th root of both sides. 
\end{proof}

We now give a second proof of Theorem~\ref{Ep1} that uses a different but related variational formula. The starting point is the generalized H\"older's inequality for traces. For $0 < p < \infty$, and $X\in M_n(\C)$, define $\|X\|_p := (\tr[(X^*X)^{p/2}])^{1/p}$. For $p \geq 1$, this is the Schatten $p$-norm, and otherwise it is a quasi-norm. Then one may prove using, for example, the theory of majorization, that for $X,Y\in M_n(\C)$ and
for $p_i>0,i=0,1,2$ such that $1/p_0=1/p_1 +1/p_2$  \cite[Exercise IV.2.7]{Bhatia97book}
\begin{equation}\label{generalized Holder}
\|XY\|_{p_0}\le\|X\|_{p_1}\|Y\|_{p_2}\ .
\end{equation}
As proved in \cite{Zhang20}, this gives us more variational formulae that are very useful in proving the concavity/convexity theorems. In fact, for any $X,Y,Z\in M_n(\C)$ with $Y$ invertible, \eqref{generalized Holder} yields (here we use the notation $|A|:=(A^\ast A)^{1/2}$)
\begin{equation*}
\tr[|XZ|^{p_0}]\le \|XY\|_{p_1}^{p_0}\|Y^{-1}Z\|^{p_0}_{p_2}\le \frac{p_0}{p_1}\tr\left[|XY|^{p_1}\right]+\frac{p_0}{p_2}\tr\left[|Y^{-1}Z|^{p_2}\right]\ ,
\end{equation*}
or equivalently
\begin{equation*}
\tr\left[|XY|^{p_1}\right]\ge \frac{p_1}{p_0}\tr\left[|XZ|^{p_0}\right]-\frac{p_1}{p_2}\tr\left[|Y^{-1}Z|^{p_2}\right] \ .
\end{equation*}
One can find $Z\in M_n(\C)$ to saturate the inequality so that \cite[Theorem 3.3]{Zhang20}
\begin{align*}
\tr\left[|XY|^{p_1}\right]
&=\max_{Z\in M_n(\C)}\left\{\frac{p_1}{p_0}\tr\left[|XZ|^{p_0}\right]-\frac{p_1}{p_2}\tr\left[|Y^{-1}Z|^{p_2}\right]\right\}\\
&=\max_{H\in M_n^{+}(\C)}\left\{\frac{p_1}{p_0}\tr\left[(XHX^\ast)^{p_0/2}\right]-\frac{p_1}{p_2}\tr\left[(|Y^{-1}HY^{\ast-1})^{p_2/2}\right]\right\} \ .
\end{align*}

Now for any $0<p<1$, take $(X,Y)=(B^\ast, A^{p/2})$ with $A\in M_n^{++}(\C)$ and $(p_0,p_1,p_2)=(2,2/p,2/(1-p))$:
\begin{equation}\label{dual2}
\tr\left[\left(B^{\ast}A^{p}B\right)^{1/p}\right]
=\max_{H\in M_n^{+}(\C)}\left\{\frac{1}{p}\tr [B^\ast H B]-\frac{1-p}{p}\tr\left[\left(A^{-p/2}H A^{-p/2}\right)^{1/(1-p)}\right]\right\} \ .
\end{equation}

\begin{proof}[Second proof of Theorem~\ref{Ep1}] We   apply \eqref{dual2} in conjunction with the monotonicity of {\em sandwiched $\alpha$-R\'enyi relative entropy} for $\alpha>1$; see M\"uller-Hermes and Reeb \cite[Theorem 2]{MHR17} and Beigi \cite{Beigi13}. Recall that the monotonicity result of M\"uller-Hermes and Reeb \cite[Theorem 2]{MHR17} states: For $\alpha>1$,
	\begin{equation}\label{sandwiched}
	\tr\left[ \left(\Lambda(\sigma)^{(1-\alpha)/(2\alpha)}\Lambda(\rho)\Lambda(\sigma)^{(1-\alpha)/(2\alpha)}\right)^{\alpha}\right]
	\le \tr\left[ \left(\sigma^{(1-\alpha)/(2\alpha)}\rho\sigma^{(1-\alpha)/(2\alpha)}\right)^{\alpha}\right]\ ,
	\end{equation}
	for any positive trace preserving map $\Lambda:M_n(\C)\to M_m(\C)$ and any $\rho,\sigma\in M_n^+(\C)$.
	
	If $\Phi$ is unital Schwarz, then
\begin{align*}
&\tr\left[\left(\Phi(B)^{\ast}A^{p}\Phi(B)\right)^{1/p}\right]\\
=&\max_{H\in M_n^{+}(\C)}\left\{\frac{1}{p}\tr \left[\Phi(B)^\ast H \Phi(B)\right]-\frac{1-p}{p}\tr\left[\left(A^{-p/2}H A^{-p/2}\right)^{1/(1-p)}\right]\right\}\\
\le &\max_{H\in M_n^{+}(\C)}\left\{\frac{1}{p}\tr \left[\Phi(B)^\ast H \Phi(B)\right]-\frac{1-p}{p}\tr\left[\left(\Phi^\dagger(A)^{-p/2}\Phi^\dagger(H)\Phi^\dagger(A)^{-p/2}\right)^{1/(1-p)}\right]\right\}\\
\le &\max_{H\in M_n^{+}(\C)}\left\{\frac{1}{p}\tr \left[B^\ast \Phi^\dagger(H) B\right]-\frac{1-p}{p}\tr\left[\left(\Phi^\dagger(A)^{-p/2}\Phi^\dagger(H)\Phi^\dagger(A)^{-p/2}\right)^{1/(1-p)}\right]\right\}\\
\le &\max_{H\in M_m^{+}(\C)}\left\{\frac{1}{p}\tr \left[B^\ast H B\right]-\frac{1-p}{p}\tr\left[\left(\Phi^\dagger(A)^{-p/2}H\Phi^\dagger(A)^{-p/2}\right)^{1/(1-p)}\right]\right\}\\
=&\tr\left[\left(B^{\ast}\Phi^\dagger(A)^{p}B\right)^{1/p}\right]\ ,
\end{align*}
where we have used \eqref{dual2} in the two equalities, the monotonicity of sandwiched $\alpha$-R\'enyi relative entropy \eqref{sandwiched} with $\alpha=1/(1-p)>1$ in the first inequality, the fact that $\Phi$ is Schwarz in the second inequality, and the definition of $\max$ in the third inequality. 
\end{proof}

\subsection{Generalizations of Theorem~\ref{Ep1}}

In Lieb Concavity Theorem, one does not require the exponents to be $t$ and $1-t$. Theorem~\ref{L1}  holds, 
as stated, when $0 < s,t$ and $s+t \leq 1$. There is a corresponding version of the monotonicity version, but for $s+t < 1$, 
we require $\Phi$ to be {\em semiunital}.  

\begin{defi}\label{sesquidef} A linear map $\Phi: M_m(\C) \to M_n(\C)$ is {\em semiunital} in case
\begin{equation}\label{sesqueq}
\Phi^\dagger(\one)  = \frac{n}{m}\one\ .
\end{equation}
It is {\em sesquiunital} in case it is unital and semiunital. 
\end{defi}

For any unital map $\Phi: M_m(\C) \to M_n(\C)$, $\Phi^\dagger$ is trace preserving, so that $n = \tr[\Phi^\dagger(\one)]$ and hence  if 
$\Phi^\dagger(\one) = c \one$ for some $c$, then necessarily $c= n/m$.   As a fundamental example, consider the unital map $\Phi$ defined in \eqref{embed}. 
In this case $n=2m$ so that $c := n/m =2$ no matter how large $m$ may be. 
Note that \eqref{embed} is precisely the map that was used to recover Epstein's Concavity Theorem from Theorem~\ref{Ep1}.
In the same way, we shall be able to recover the full statement of Lieb Concavity Theorem from the extension of Theorem~\ref{L1M} that we next state and prove:

\begin{thm}\label{L1MB}  Let $0 < s,t$, $s+t \leq 1$, and let $\Phi: M_m(\C) \to M_n(\C)$ be  a semiunital Schwarz map. Then for all 
	$X,Y \in M_n^{+}(\C)$ and all $K\in M_m(\C)$,  
		\begin{equation}\label{liebmon2} 
	\tr [\Phi(K)^\ast X^{s} \Phi(K)Y^{t}]\leq \left(\frac{n}{m}\right)^{1-(s+t)} \tr [K^\ast \Phi^\dagger(X)^{s} K \Phi^\dagger(Y)^{t}] \ .
	\end{equation} 
\end{thm}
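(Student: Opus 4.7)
The $s+t=1$ case is exactly Theorem~\ref{L1M}, since the prefactor $(n/m)^{1-(s+t)}$ is $1$; I therefore focus on $0<s+t<1$. First I would record that semiunitality plus Schwarz actually implies $\Phi$ is unital: taking $K=\one_m$ in \eqref{schwarz1} gives $\Phi(\one_m)\ge\Phi(\one_m)^2$, hence $\Phi(\one_m)\le\one_n$, whereas semiunitality gives $\tr\Phi(\one_m)=\tr\Phi^\dagger(\one_n)=n$, forcing $\Phi(\one_m)=\one_n$. So $\Phi$ is in fact sesquiunital Schwarz.

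The central move is an Araki-style substitution. Set $p:=s+t\in(0,1)$ and apply Theorem~\ref{L1M} with exponent $s/p\in(0,1)$ (so that $1-s/p=t/p$) to the pair $(X^p,Y^p)$. Because $(X^p)^{s/p}=X^s$ and $(Y^p)^{t/p}=Y^t$, this delivers
\begin{equation*}
\tr[\Phi(K)^*X^s\Phi(K)Y^t]\le\tr[K^*\Phi^\dagger(X^p)^{s/p}K\Phi^\dagger(Y^p)^{t/p}].
\end{equation*}
To match the claimed right-hand side, I would establish the Jensen-type operator inequality
\begin{equation*}
\Phi^\dagger(Z^p)\le\Big(\tfrac{n}{m}\Big)^{1-p}\Phi^\dagger(Z)^p\qquad\bigl(Z\in M_n^{++}(\C),\ 0<p<1\bigr).
\end{equation*}
Granted this, raising both sides to the powers $s/p,t/p\in(0,1]$ by L\"owner--Heinz operator monotonicity, and using that $(C,D)\mapsto\tr[K^*CKD]$ is monotone nondecreasing in each positive argument, collapses the total prefactor to $(n/m)^{(1-p)(s+t)/p}=(n/m)^{1-(s+t)}$, which is exactly what the theorem asserts.

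For the Jensen inequality I would pass to $\hat\Psi:=(m/n)\Phi^\dagger$, which is unital, so the bound rewrites as $\hat\Psi(Z^p)\le\hat\Psi(Z)^p$. Starting from the Kadison--Schwarz-type inequality $\hat\Psi((Z+\lambda\one)^{-1})\ge(\hat\Psi(Z)+\lambda\one)^{-1}$ for $\lambda>0$, I would plug it into the Herglotz representation $Z^p=\frac{\sin(\pi p)}{\pi}\int_0^\infty\lambda^{p-1}Z(Z+\lambda\one)^{-1}\,d\lambda$ and then perform the change of variable $\mu=(n/m)\lambda$ inside the integral; this rescaling is precisely what supplies the factor $(n/m)^{1-p}$ and reassembles the integrand into $\Phi^\dagger(Z)^p$.

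The main obstacle is securing the Kadison--Schwarz inequality for $\hat\Psi$, equivalently showing that $\Phi^\dagger$ is a \emph{generalized Schwarz} map in the sense of~\eqref{genschw}. This is automatic when $\Phi$ is $2$-positive, since $2$-positivity is self-dual; for the weaker hypothesis of Schwarz alone, however, the adjoint is not a priori generalized Schwarz, and the hierarchy~\eqref{positmaps} makes clear that the gap is genuine. I expect that the sesquiunitality derived above, combined with the block-matrix characterization underlying~\eqref{genschw}, is enough to close this step; failing that, a duality or interpolation argument in the spirit of the two proofs of Theorem~\ref{Ep1}, avoiding the explicit Jensen estimate altogether, is my fallback route.
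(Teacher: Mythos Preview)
Your overall route is the same as the paper's: reduce to the $s+t=1$ case of Theorem~\ref{L1M} by an Araki-type substitution, and then repair the exponents with an operator Jensen inequality for the unital map $\hat\Psi=(m/n)\Phi^\dagger$. The paper substitutes asymmetrically---writing $t=r(1-s)$ and replacing only $Y$ by $Z=Y^r$, so that Jensen is needed on just one factor---whereas you substitute symmetrically with $(X^p,Y^p)$, $p=s+t$; both variants produce the correct constant $(n/m)^{1-(s+t)}$.

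The ``main obstacle'' you flag is, however, illusory. You do \emph{not} need $\Phi^\dagger$ (or $\hat\Psi$) to be generalized Schwarz. The Davis--Choi--Jensen inequality $\hat\Psi(f(Z))\le f(\hat\Psi(Z))$ for operator concave $f$ holds for \emph{every} unital positive linear map; this is precisely \cite[Theorem~2.1]{Choi74} (equivalently \cite[Proposition~2.7.1]{Bhatia09pdm}), which the paper invokes at exactly this point. The reason positivity alone suffices is that the inequality involves only the single self-adjoint operator $Z$, so one may restrict $\hat\Psi$ to the commutative $C^*$-algebra generated by $Z$, where every positive map is automatically completely positive. In particular, both $\hat\Psi(Z^p)\le\hat\Psi(Z)^p$ and the resolvent bound $\hat\Psi((Z+\lambda\one)^{-1})\ge(\hat\Psi(Z)+\lambda\one)^{-1}$ you sought follow immediately, since $z\mapsto z^p$ is operator concave and $z\mapsto(z+\lambda)^{-1}$ is operator convex. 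No duality or interpolation fallback is needed; your Herglotz computation (or the direct citation of Davis--Choi--Jensen, as in the paper) closes the argument. Incidentally, your observation that a semiunital Schwarz map is automatically unital is correct but not used in either proof.
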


\begin{proof}  In case $s+t =1$, the theorem is already proved under the weaker assumption that $\Phi$ is a Schwarz map. Therefore suppose that $t < 1-s$ and define $0 < r < 1$ so that $t = r(1-s)$.   Define $Z = Y^{r}$. Then  by the definitions and Theorem~\ref{L1M},
$$
\tr [\Phi(K)^\ast X^{s} \Phi(K)Y^{t}] = \tr [\Phi(K)^\ast X^{s} \Phi(K)Z^{1-s}] \leq \tr[ K^* \Phi^\dagger(X)^{s} K \Phi^\dagger(Z)^{1-s}]\ .
$$
Therefore,
$$
\frac{m}{n} \tr [\Phi(K)^\ast X^{s} \Phi(K)Y^{t}]  \leq \tr\left[ K^* \left(\frac{m}{n} \Phi^\dagger(X)\right)^{s} K \left(\frac{m}{n} \Phi^\dagger(Y^r)\right)^{1-s}\right].
$$
Since $\frac{m}{n}\Phi^\dagger$ is unital, an operator Jensen type inequality for unital positive maps \cite[Theorem 2.1]{Choi74} (see also  \cite[Proposition 2.7.1]{Bhatia09pdm}) yields
${\displaystyle \frac{m}{n}  \Phi^\dagger(Y^r)\leq \left( \frac{m}{n}  \Phi^\dagger(Y)\right)^r}$. Then by the operator monotonicity of $A\mapsto A^{1-s},0<s<1$:
$$
\left(\frac{m}{n}  \Phi^\dagger(Y^r)\right)^{1-s}\leq \left( \frac{m}{n}  \Phi^\dagger(Y)\right)^{t} \ .
$$
 This finishes the proof. 
\end{proof}

Now applying the method used in our first proof of Theorem~\ref{Ep1}, we prove:

\begin{thm}\label{Ep2}  Let $0 < s \leq 1$ and $s\leq r\le 1$. Let $\Phi$ be any sesquiunital Schwarz map from $M_m(\C)$ to $M_n(\C)$ for  $m,n\in \N$. 
Then for all $A \in M_n^+(\C)$ and all $B\in M_m(\C)$, 
		\begin{equation}\label{ext2} 
	\tr[(\Phi(B)^* A^s\Phi(B))^{r/s}]  \leq \left(\frac{n}{m}\right)^{1-r} \tr[(B^*\Phi^\dagger(A)^s B)^{r/s}]\ .
	\end{equation} 
\end{thm}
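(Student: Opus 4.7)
The plan is to imitate the first proof of Theorem~\ref{Ep1} line for line, replacing the invocation of Theorem~\ref{L1M} by an invocation of the sesquiunital monotonicity result Theorem~\ref{L1MB}. The factor $(n/m)^{1-(s+t)}$ produced there will, with the right choice of H\"older exponents, collapse to $(n/m)^{1-r}$ after the final exponentiation, giving exactly the constant required in \eqref{ext2}.

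Concretely, since $r/s \geq 1$ with H\"older conjugate $(r/s)/((r/s)-1) = r/(r-s)$, the duality formula \eqref{maxlem} yields
\begin{equation*}
\bigl(\tr[(\Phi(B)^* A^s \Phi(B))^{r/s}]\bigr)^{s/r} = \max\bigl\{ \tr[\Phi(B)^* A^s \Phi(B) Y] : Y \in M_n^+(\C),\ \tr[Y^{r/(r-s)}] = 1 \bigr\}.
\end{equation*}
I would then change variables $Y = W^t$ with $t := (r-s)/r$, so that the constraint becomes $\tr[W] = 1$. Under the open assumption $s < r$ (so $t > 0$) together with $r \leq 1$ (so $s+t = 1 - s(1-r)/r \leq 1$), Theorem~\ref{L1MB} applies with exponents $s$ and $t$ and gives
\begin{equation*}
\tr[\Phi(B)^* A^s \Phi(B) W^t] \leq (n/m)^{s(1-r)/r}\, \tr[B^* \Phi^\dagger(A)^s B\, \Phi^\dagger(W)^t].
\end{equation*}
Because $\Phi$ is unital, $\Phi^\dagger$ is trace-preserving, hence $\Phi^\dagger(W) \in M_m^+(\C)$ has unit trace. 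Enlarging the supremum to range over all $Z \in M_m^+(\C)$ with $\tr[Z] = 1$ and then reapplying \eqref{maxlem} in $M_m(\C)$ (with the same substitution reversed) identifies the bound as $(n/m)^{s(1-r)/r}(\tr[(B^* \Phi^\dagger(A)^s B)^{r/s}])^{s/r}$. Raising both sides to the $(r/s)$-th power converts the exponent $s(1-r)/r$ into $1-r$ and delivers \eqref{ext2}.

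The one gap to fill is the boundary case $r = s$, where $t = 0$ falls outside the hypotheses of Theorem~\ref{L1MB}; I expect to handle this by continuity, letting $r \downarrow s$ in the inequality already proved (both sides are jointly continuous in $r$). The only step requiring real care is the exponent bookkeeping: one must check $\bigl(1 - (s+t)\bigr)(r/s) = 1-r$ with the choice $t = (r-s)/r$, which is a direct algebraic identity. Beyond Theorem~\ref{L1MB}, the duality \eqref{maxlem}, and the trace-preserving property of $\Phi^\dagger$ (which follows from unitality), no additional ingredient is needed; in particular the semiunitality condition enters only through Theorem~\ref{L1MB} itself.
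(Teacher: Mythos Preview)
Your proposal is correct and follows essentially the same route as the paper's proof: both use the duality formula \eqref{maxlem} with exponent $r/s$, rewrite the constraint so that Theorem~\ref{L1MB} applies with $t=(r-s)/r$, invoke the trace-preserving property of $\Phi^\dagger$ to enlarge the maximum, and then reapply \eqref{maxlem}; the exponent bookkeeping $(1-(s+t))\cdot r/s = 1-r$ is exactly the computation the paper carries out. The only cosmetic difference is that the paper disposes of all boundary cases ($s=1$, $r=s$, $r=1$) in one line of continuity at the outset, whereas you single out $r=s$; your continuity argument handles the remaining edges equally well.
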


Before proving the theorem, we first extract the concavity theorem that is its corollary. This concavity result was proved in \cite[Theorem 1.1]{CL08} and \cite[Theorem 4.1]{Hiai13}.

\begin{cl}\label{cor3.4}  For all $B\in M_m(\C)$, $0 < s < 1$ and $0 < r\le 1$, the map $A \mapsto \tr[(B^* A^s B)^{r/s}]$ is concave on $M_m^{+}(\C)$. 
\end{cl}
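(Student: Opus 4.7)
The plan is to extract the concavity statement from Theorem~\ref{Ep2} using the same block-diagonal embedding trick that recovered Epstein's Theorem from Theorem~\ref{Ep1}. Let $\Phi: M_m(\C) \to M_{2m}(\C)$ be given by $\Phi(X) = \left[\begin{array}{cc} X & 0 \\ 0 & X\end{array}\right]$, so that $\Phi^\dagger\!\left(\left[\begin{array}{cc} C_{11} & C_{12} \\ C_{21} & C_{22}\end{array}\right]\right) = C_{11}+C_{22}$. This $\Phi$ is unital completely positive, hence a unital Schwarz map, and it is sesquiunital since $\Phi^\dagger(\one_{2m}) = 2\,\one_m = (n/m)\one_m$.

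Given $A_1,A_2 \in M_m^{+}(\C)$, set $A := \left[\begin{array}{cc} A_1 & 0 \\ 0 & A_2\end{array}\right]$. Because $A$ and $\Phi(B)$ are block-diagonal, the $s$-th power, conjugation by $\Phi(B)$, and the $(r/s)$-th power all respect the block structure, so the left side of \eqref{ext2} reduces to $\tr[(B^* A_1^s B)^{r/s}] + \tr[(B^* A_2^s B)^{r/s}]$. On the right, $\Phi^\dagger(A) = A_1+A_2$, and the factor $(n/m)^{1-r} = 2^{1-r}$ appears. Thus Theorem~\ref{Ep2}, valid in the range $s \leq r \leq 1$, yields
\begin{equation*}
\tr[(B^* A_1^s B)^{r/s}] + \tr[(B^* A_2^s B)^{r/s}] \;\leq\; 2^{1-r}\,\tr[(B^*(A_1+A_2)^s B)^{r/s}].
\end{equation*}

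Writing $f(A) := \tr[(B^* A^s B)^{r/s}]$, one checks that $f$ is positively homogeneous of degree $s\cdot(r/s)=r$, so $2^{1-r} f(A_1+A_2) = 2^{1-r}\cdot 2^r\, f\!\left(\tfrac{A_1+A_2}{2}\right) = 2 f\!\left(\tfrac{A_1+A_2}{2}\right)$. Dividing by $2$ gives the midpoint concavity inequality $\tfrac{1}{2}f(A_1) + \tfrac{1}{2}f(A_2) \leq f\!\left(\tfrac{A_1+A_2}{2}\right)$, which together with the obvious continuity of $f$ on $M_m^{+}(\C)$ upgrades to full concavity. This settles the corollary in the range $s \leq r \leq 1$.

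The only case not covered by Theorem~\ref{Ep2} is $0 < r < s$, and here no monotonicity input is needed: since $0 < s < 1$, the map $A \mapsto A^s$ is operator concave on $M_m^{+}(\C)$, so $A \mapsto B^*A^s B$ is operator concave. With $p := r/s \in (0,1)$, the scalar function $x \mapsto x^p$ is monotone nondecreasing and concave on $[0,\infty)$, and Weyl's monotonicity together with the standard concavity of $X \mapsto \tr X^p$ on $M^{+}$ shows that composing with $\tr(\cdot)^{p}$ preserves concavity. The main obstacle I expect is simply keeping the two regimes straight and verifying that the block-diagonal computation of both sides of \eqref{ext2} goes through cleanly; everything else is bookkeeping with homogeneity and continuity.
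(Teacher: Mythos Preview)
Your proof is correct and follows essentially the same two-case structure as the paper: for $s\le r\le 1$ you apply Theorem~\ref{Ep2} with the block-diagonal embedding \eqref{embed} and use homogeneity exactly as the paper does. In the elementary case $0<r<s$ you take a slightly more direct route than the paper --- you compose the operator concavity of $A\mapsto B^*A^sB$ with the concavity and operator-monotonicity of $X\mapsto\tr X^p$, whereas the paper first passes through the min-formula \eqref{minlem1} to show $A\mapsto\|B^*A^sB\|_p$ is concave (as an infimum of concave functions) and then raises to the $p$-th power; both arguments are standard and valid.
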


\begin{proof} First suppose that $s\leq r\le 1$ so that we may apply Theorem~\ref{Ep2}. As in the discussion below the statement of Theorem~\ref{Ep1}, consider $A_1,A_2\in M_{m}^{+}(\C)$, $B\in M_m(\C)$ and define $A := \left[\begin{array}{cc} A_1 & 0 \\ 0 & A_2\end{array}\right]$. Define $\Phi$ as in \eqref{embed}, and note that $\Phi$ is Schwarz and sesquiunital. 
One computes
$$
\tr[(\Phi(B)^* A^s\Phi(B))^{r/s}]  = \tr[(B^*A_1^sB)^{r/s}] + \tr[(B^*A_2^sB)^{r/s}] \ ,
$$
and 
$$
  \tr[(B^*\Phi^\dagger(A)^s B)^{r/s}] = \tr[ (B^*(A_1+A_2)^sB)^{r/s}]\ .
$$
In this case, $n= 2m$ so that $(n/m)^r = 2^r$. Therefore, by Theorem~\ref{Ep2}
\begin{eqnarray*}
\frac12\left(\tr[(B^*A_1^sB)^{r/s}] + \tr[(B^*A_2^sB)^{r/s}] \right) &=&  \frac12 \tr[(\Phi(B)^* A^s\Phi(B))^{r/s}] \\
&\leq& \left(\frac12\right)^{r} \tr[(B^*\Phi^\dagger(A)^s B)^{r/s}]\\
&=& \tr\left [ \left(B^*\left(\frac{A_1+A_2}{2}\right)^sB\right)^{r/s}\right] \ .
\end{eqnarray*}
As before, with the obvious continuity, midpoint concavity implies concavity.

Next, suppose that $0 < r < s$.  (This part is elementary, and does not depend on Theorem~\ref{L1M}.) Define $p := r/s$, and note that $0 < p < 1$.  Then $(\tr[(B^* A^s B)^{r/s}])^{s/r} = (\tr[(B^* A^s B)^{p}])^{1/p} = \|B^* A^s B\|_p$.	By \eqref{minlem1},
$$\|B^* A^s B\|_p = \min\{ \tr[ B^* A^s B Y] \ : Y\in M_n^{++}(\C)\ ,\ \tr[Y^{p/(p-1)}] = 1  \}\ ,$$
and then since $A \mapsto A^s$ is concave, so is $A \mapsto ( \tr[(B^* A^s B)^{r/s}])^{s/r}$.  Define  $f(A) := (\tr[(B^* A^s B)^{r/s}])^{s/r}$ and 
$g(A) := \tr[(B^* A^s B)^{r/s}]$ so that $g(A) = (f(A))^p$.  Since $f$ is concave, and $h(x):= x^p$ is concave and monotone increasing on $[0,\infty)$, $g$ is also concave.
\end{proof} 

\begin{remark} Note that we have recovered a dimension independent concavity theorem from a monotonicity theorem that may at first appear to be dimension dependent.   But of course, the dimensions $m$ and $n$ enter the inequality provided by the monotonicity theorem only through the ratio $n/m$, and this can be finite or constant in meaningful examples even as the dimensions tend to infinity. 
\end{remark}

The same argument also yields the full Lieb Concavity Theorem from its monotonicity version Theorem \ref{L1MB}.

\begin{proof}[Proof of Theorem~\ref{Ep2}]
We proceed exactly as in the first proof of Theorem~\ref{Ep1}. By continuity we may assume that $0 < s < 1$ and $s <r< 1$, so $0<t < 1$ and $s+t < 1$ for $t:=(r-s)/r$. By \eqref{maxlem}, we have 
$$
(\tr[(\Phi(B)^* A^s\Phi(B))^{r/s}])^{s/r} = \max\{ \tr[\Phi(B)^*A^s\Phi(B)Y^{(r-s)/r}] \ :\ \tr[Y] =1\ \}\ .
$$
By Theorem~\ref{L1MB},  for all $Y\in M_n^+(\C)$ with $\tr[Y] =1$, 
$$
\tr[\Phi(B)^*A^s\Phi(B)Y^{(r-s)/r}] \leq  \left(\frac{n}{m}\right)^{s(1-r)/r}\tr[B^*\Phi^\dagger(A)^s B \Phi^\dagger(Y)^{(r-s)/r}]\ .
$$
Therefore, since $\Phi^\dagger$ is trace preserving and using \eqref{maxlem}, we have
\begin{eqnarray*}
(\tr[(\Phi(B)^* A^s\Phi(B))^{r/s}])^{s/r}  &\leq &  \left(\frac{n}{m}\right)^{s(1-r)/r}\max\{  \tr[B^*\Phi^\dagger(A)^s B Z^{(r-s)/r}]  \ :\ \tr[Z] =1 \}\\
&=&  \left(\frac{n}{m}\right)^{s(1-r)/r}\left( \tr[(B^*\Phi^\dagger(A)^s B)^{r/s}] \right)^{s/r}\ .
\end{eqnarray*}
Hence
$$
(\tr[(\Phi(B)^* A^s\Phi(B))^{r/s}])^{s/r}   \leq  \left(\frac{n}{m}\right)^{s(1-r)/r}\left( \tr[(B^*\Phi^\dagger(A)^s B)^{r/s}] \right)^{s/r}\ ,
$$
which is the same as \eqref{ext2}. 
\end{proof}

%
%

\section{Duality, monotonicity and the Lieb Convexity Theorem}

The same ideas that we have developed concerning the monotonicity version of the Lieb Concavity Theorem may be developed with respect to the Lieb Convexity Theorem, Theorem~\ref{L2}, as we now explain.   We begin with the analog of Theorem~\ref{L2M}. Afterwards, we will deduce the analog of Theorem~\ref{Ep2}, and deduce its convexity analog.

\begin{thm}\label{Ep3A}   Let $0 < p < 1$, and let $\Phi: M_m(\C) \to M_n(\C)$ be a unital Schwarz map.  Then for any 
$A\in M_{n}^{++}(\C)$ and any invertible $B\in M_m(\C)$,
\begin{equation}\label{EpMon2}
 \tr[ (B^*A^{-p} B)^{1/(2-p)}]  \geq    \tr[ (\Phi^\dagger(B)^*\Phi^\dagger(A)^{-p} \Phi^\dagger(B))^{1/(2-p)}]   \ .
\end{equation}
\end{thm}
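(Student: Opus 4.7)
The plan is to mirror the first proof of Theorem~\ref{Ep1}, replacing the Lieb Concavity monotonicity (Theorem~\ref{L1M}) by its Lieb Convexity counterpart (Theorem~\ref{L2M}) and the duality formula (\ref{maxlem}) by (\ref{minlem1}). With $q:=1/(2-p)\in(1/2,1)$, applying (\ref{minlem1}) and changing variables via $Y=Z^{p-1}$ (which turns the constraint $\tr[Y^{q/(q-1)}]=1$ into $\tr[Z]=1$) yields, for any $X\in M^{++}$,
\begin{equation*}
\bigl(\tr[X^{1/(2-p)}]\bigr)^{2-p} \;=\; \min\bigl\{\, \tr[X Z^{p-1}] \,:\, Z \in M^{++},\ \tr[Z]=1 \,\bigr\}.
\end{equation*}
Raising both sides of \eqref{EpMon2} to the $(2-p)$-th power and applying this formula in each dimension reduces the theorem to the comparison
\begin{equation*}
\min_{W \in M_m^{++},\, \tr W = 1}\! \tr\!\bigl[\Phi^\dagger(B)^* \Phi^\dagger(A)^{-p} \Phi^\dagger(B)\, W^{p-1}\bigr]
\;\le\;
\min_{Z \in M_n^{++},\, \tr Z = 1}\! \tr\!\bigl[B^* A^{-p} B\, Z^{p-1}\bigr].
\end{equation*}

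To establish this, I would invoke Theorem~\ref{L2M} with our map $\Phi$ in the role of the Schwarz map (swapping the meanings of $m$ and $n$ in L2M's statement) and the substitutions $t=1-p$, $K=B$, $Y=A$, $X=Z$. Since $\Phi^\dagger$ preserves adjoints, this gives, for every feasible $Z$,
\begin{equation*}
\tr\!\bigl[\Phi^\dagger(B)^* \Phi^\dagger(A)^{-p} \Phi^\dagger(B)\, \Phi^\dagger(Z)^{p-1}\bigr] \;\le\; \tr\!\bigl[B^* A^{-p} B\, Z^{p-1}\bigr].
\end{equation*}
Set $W:=\Phi^\dagger(Z)$. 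Unitality of $\Phi$ makes $\Phi^\dagger$ trace preserving, so $\tr[W]=\tr[Z]=1$, i.e., $W$ is a feasible competitor for the $W$-minimization. Bounding the $W$-minimum by the value at this particular $W$, then minimizing the resulting inequality over $Z$, yields the desired comparison of minima; taking $(2-p)$-th roots then gives \eqref{EpMon2}.

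The principal obstacle is positivity of images: Theorem~\ref{L2M} requires $\Phi^\dagger(\one) \in M_m^{++}$, and the substitution $W=\Phi^\dagger(Z)$ presumes $\Phi^\dagger(Z) \in M_m^{++}$, both of which can fail if $\Phi^\dagger$ has a nontrivial kernel. I would resolve this uniformly by the perturbation $\Phi_\varepsilon := (1-\varepsilon)\Phi + \varepsilon\,\tau$, where $\tau(X) := (m^{-1}\tr X)\,\one$ is the maximally mixing channel. A short computation shows $\tau$ is unital Schwarz, and convex combinations of Schwarz maps remain Schwarz (expand $(\Phi_1(K)-\Phi_2(K))^*(\Phi_1(K)-\Phi_2(K)) \ge 0$), so $\Phi_\varepsilon$ is a unital Schwarz map with $\Phi_\varepsilon^\dagger(\one) \ge \varepsilon(n/m)\one > 0$; in particular $\Phi_\varepsilon^\dagger(Z) > 0$ for every $Z \in M_n^{++}$. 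Prove the inequality for each $\Phi_\varepsilon$ and send $\varepsilon \downarrow 0$; a parallel $\delta\one$ regularization extends (\ref{minlem1}) to the boundary case when $\Phi^\dagger(B)^*\Phi^\dagger(A)^{-p}\Phi^\dagger(B)$ is only positive semidefinite.
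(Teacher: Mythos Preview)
Your proof is correct and follows exactly the paper's own argument: write $(\tr[X^{1/(2-p)}])^{2-p}$ via the minimum formula \eqref{minlem1}, change variables so the constraint becomes $\tr[Z]=1$, apply Theorem~\ref{L2M}, use that $\Phi^\dagger$ is trace preserving to enlarge the feasible set, and apply \eqref{minlem1} again. Your perturbation $\Phi_\varepsilon=(1-\varepsilon)\Phi+\varepsilon\tau$ is in fact more careful than the paper, which silently assumes the hypothesis $\Phi^\dagger(\one)\in M_m^{++}$ of Theorem~\ref{L2M} (and hence $\Phi^\dagger(Z)>0$) without justification; your argument that convex combinations of Schwarz maps are Schwarz and that $\Phi_\varepsilon^\dagger(\one)\ge \varepsilon(n/m)\one$ cleanly fills this gap.
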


\begin{proof} Define $r := 1/(2-p)$ and note that $1/2< r < 1$.  Then by \eqref{minlem1}, 
\begin{eqnarray*}
&&( \tr[ (B^*A^{-p} B)^{1/(2-p)}])^{1/r} \\
&=& \left(\tr[ (B^*A^{-p} B)^r]\right)^{1/r}\\
 &=& \min\left\{ \tr[B^*A^{-p} B Y]\ :\ Y\in M_n^{++}(\C)\ ,\quad \tr[Y^{r/(r-1)}] =1\ \right\} \\
  &=& \min\left\{ \tr[B^*A^{-p} BY^{1-1/r}]\ :\ Y\in M_n^{++}(\C)\ ,\quad \tr[Y] =1\ \right\} \\
   &=& \min\left\{ \tr[B^*A^{-p} B Y^{p-1}]\ :\ Y\in M_n^{++}(\C)\ ,\quad \tr[Y] =1\ \right\} \\
   &\geq&   \min\left\{ \tr[  \Phi^\dagger(B)^*\Phi^\dagger(A)^{-p}  \Phi^\dagger(B)\Phi^\dagger(Y)^{p-1}]\ :\ Y\in M_n^{++}(\C)\ ,\quad \tr[Y] =1\ \right\}\\
    &\geq&   \min\left\{ \tr[  \Phi^\dagger(B)^*\Phi^\dagger(A)^{-p}  \Phi^\dagger(B)Z^{p-1}]\ :\ Z\in M_m^{++}(\C)\ ,\quad \tr[Z] =1\ \right\}\\
    &=&   \min\left\{ \tr[  \Phi^\dagger(B)^*\Phi^\dagger(A)^{-p}  \Phi^\dagger(B)Z]\ :\ Z\in M_m^{++}(\C)\ ,\quad \tr[Z^{r/(r-1)}] =1\ \right\}\\
    &=&  (\tr[ (\Phi^\dagger(B)^*\Phi^\dagger(A)^{-p} \Phi^\dagger(B))^{1/(2-p)}])^{1/r}\ .
\end{eqnarray*}
The first inequality is \eqref{lieb22}, and the second results from taking the minimum over a larger set. Now raise both sides to the $r$-th power. 
\end{proof}

\begin{remark}\label{tracial-rmk} Taking the limit $p \uparrow 1$ in \eqref{EpMon2}, we obtain that for all $A \in M_n^{++}(\C)$, all invertible $B\in M_n(\C)$ and all unital Schwarz maps $\Phi: M_m(\C) \to M_n(\C)$,
\begin{equation}\label{tracial}
\tr[ B^*A^{-1} B]  \geq    \tr[ \Phi^\dagger(B)^*\Phi^\dagger(A)^{-1} \Phi^\dagger(B))]  \ ,
\end{equation}
a special case of a result that was recently proved in \cite{CMH22}. Actually, they proved that \eqref{tracial} holds whenever $\Phi$ is Schwarz (and vice versa),  which we have already noted is an end-point case of Theorem~\ref{L2M}. Thus, one way to prove this special case (without assuming $\Phi$ to be unital), is to take the interpolation proof of  Theorem~\ref{L2M}
given in Section 5,   or Uhlmann's proof of Theorem~\ref{L1M} and  \cite[Lemma 1]{HP12quasi} to prove Theorem~\ref{L2M} (assuming $\Phi$ to be unital).  However, the simple and direct proof of a more general result in \cite{CMH22} is by far the most direct route to \eqref{tracial}. 

Notice that if we choose $A=\un$ in \eqref{tracial}, then 
\begin{equation}\label{tracialB}
\|\Phi^\dagger(\un)\|\tr[ B^*B]  \geq    \tr[ \Phi^\dagger(B)^* \Phi^\dagger(B))]  \ .
\end{equation}
\end{remark}

We now prove the extension of Theorem~\ref{L2M} corresponding to the full range of homogeneities in Theorem~\ref{L2}.

\begin{thm}\label{L2MB}   For all $s,t >0$ with $s+t \leq 1$, all $m,n\in \N$, all 
$X,Y \in M_n^{++}(\C)$ and all $K\in M_n(\C)$  and all  semiunital Schwarz maps $\Phi: M_m(\C)\to M_n(\C)$ such that $\Phi^\dagger(\un) \in M_m^{++}(\C)$, 
\begin{equation}\label{liebM22} 
 \tr[ \Phi^\dagger(K^*)\Phi^\dagger(Y)^{-s}  \Phi^\dagger(K)\Phi^\dagger(X)^{-t}]  \leq  \left(\frac{n}{m}\right)^{1-(s+t)} \tr[ K^*Y^{-s}  KX^{-t}]\ .
\end{equation} 
\end{thm}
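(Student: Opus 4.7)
The plan is to reduce Theorem~\ref{L2MB} to the boundary case $s+t=1$ already handled by Theorem~\ref{L2M}, following the exact same pattern used in the proof of Theorem~\ref{L1MB}: the ``milder'' exponent $-s$ stays put, while the exponent $-t$ is produced by substituting a suitable power of $X$ into Theorem~\ref{L2M} and then trading $\Phi^\dagger(X^r)$ for $\Phi^\dagger(X)^r$ via the operator Jensen inequality for the unital positive map $(m/n)\Phi^\dagger$. When $s+t=1$ the inequality is exactly Theorem~\ref{L2M} with multiplicative constant $(n/m)^{0}=1$, so assume $s+t<1$ and set $r:=t/(1-s)\in(0,1)$, so that $r(1-s)=t$ and hence $(X^r)^{-(1-s)}=X^{-t}$. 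Applying Theorem~\ref{L2M} with $Y$ unchanged and $X$ replaced by $X^r\in M_n^{++}(\C)$, using exponents $-s$ and $-(1-s)$ that sum to $-1$, gives
\begin{equation*}
\tr[\Phi^\dagger(K^*)\Phi^\dagger(Y)^{-s}\Phi^\dagger(K)\Phi^\dagger(X^r)^{-(1-s)}]\;\leq\;\tr[K^*Y^{-s}KX^{-t}].
\end{equation*}

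It remains to replace $\Phi^\dagger(X^r)^{-(1-s)}$ on the left by a comparable multiple of $\Phi^\dagger(X)^{-t}$. Since $\Phi$ is semiunital and Schwarz, the map $(m/n)\Phi^\dagger$ is unital and positive, so the operator Jensen inequality \cite[Theorem 2.1]{Choi74} applied to the operator concave function $z\mapsto z^r$ on $(0,\infty)$ yields $(m/n)\Phi^\dagger(X^r)\leq((m/n)\Phi^\dagger(X))^r$, i.e.
\begin{equation*}
\Phi^\dagger(X^r)\;\leq\;(n/m)^{1-r}\,\Phi^\dagger(X)^r.
\end{equation*}
Raising both sides to the operator-monotone power $1-s\in(0,1)$ and then inverting (an order-reversing operation on $M_m^{++}(\C)$) produces
\begin{equation*}
\Phi^\dagger(X)^{-t}\;\leq\;(n/m)^{(1-r)(1-s)}\,\Phi^\dagger(X^r)^{-(1-s)}\;=\;(n/m)^{1-s-t}\,\Phi^\dagger(X^r)^{-(1-s)},
\end{equation*}
using the identity $(1-r)(1-s)=1-s-t$ built into the choice of $r$.

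To chain the two displayed inequalities into a trace bound, observe that $M:=\Phi^\dagger(K^*)\Phi^\dagger(Y)^{-s}\Phi^\dagger(K)$ is positive semidefinite because it has the form $L^*PL$ with $P=\Phi^\dagger(Y)^{-s}\geq 0$. For any $M\geq 0$ and self-adjoint $A,B$ satisfying $A\leq cB$ with $c>0$, cyclicity of the trace together with $M^{1/2}(cB-A)M^{1/2}\geq 0$ yields $\tr[MA]\leq c\,\tr[MB]$. Applying this with $A=\Phi^\dagger(X)^{-t}$, $B=\Phi^\dagger(X^r)^{-(1-s)}$, and $c=(n/m)^{1-s-t}$, and then chaining with the Theorem~\ref{L2M} inequality above, delivers \eqref{liebM22}. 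The only genuinely new step beyond the ingredients of the proof of Theorem~\ref{L1MB} is bookkeeping the exponent $(1-r)(1-s)=1-s-t$ through the Jensen/monotonicity/inversion cascade, so no substantive obstacle is expected.
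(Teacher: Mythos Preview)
Your proof is correct and follows essentially the same route as the paper's. The only cosmetic differences are that the paper substitutes a power into $Y$ (setting $r=s/(1-t)$ and replacing $Y$ by $Y^r$) rather than into $X$, and it applies the operator monotone \emph{decreasing} map $z\mapsto z^{t-1}$ in a single step rather than splitting it into ``raise to the $(1-s)$-th power, then invert''; both choices are symmetric variants of the same reduction to Theorem~\ref{L2M} via the operator Jensen inequality for the unital positive map $(m/n)\Phi^\dagger$.
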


\begin{proof}We proceed as in the proof of Theorem~\ref{L1MB}. As in that case, we may assume $s < 1-t$, and we define $r := s/(1-t)$ so that $0 < r < 1$. 
Since $\Phi$ is semiunital, $\frac{m}{n}\Phi^\dagger$ is unital and positive. Again, by \cite[Theorem 2.1]{Choi74} or \cite[Proposition 2.7.1]{Bhatia09pdm} ,  since $A \mapsto A^r$ is operator concave, 
$$\frac{m}{n}\Phi^\dagger(Y^r)  \leq \left(\frac{m}{n}\Phi^\dagger(Y)\right)^r\ ,$$
and hence by the fact that $A \mapsto A^{t-1}$ is operator decreasing,
\begin{equation}\label{monrel2}
\left(\frac{m}{n}\Phi^\dagger(Y^r) \right)^{t-1}  \geq \left(\frac{m}{n}\Phi^\dagger(Y) \right)^{-s}\ .
\end{equation}
By Theorem~\ref{L2M} and then \eqref{monrel2}
\begin{eqnarray*} 
\frac{m}{n} \tr[K^* Y^{-s} K X^{-t}] &=&  \frac{m}{n} \tr[K^* (Y^r)^{t-1} K X^{-t}] \\
&\geq & \frac{m}{n}\tr[\Phi^\dagger(K)^* \Phi^\dagger(Y^r)^{t-1} \Phi^\dagger(K) \Phi^\dagger(X)^{-t}]\\
&=& \tr\left[\left(\frac{m}{n}\Phi^\dagger(K)\right)^* \left(\frac{m}{n}\Phi^\dagger(Y^r)\right)^{t-1} \left(\frac{m}{n}\Phi^\dagger(K)\right) \left(\frac{m}{n}\Phi^\dagger(X)\right)^{-t}\right]\\
&\geq& \tr\left[\left(\frac{m}{n}\Phi^\dagger(K)\right)^* \left(\frac{m}{n}\Phi^\dagger(Y) \right)^{-s} \left(\frac{m}{n}\Phi^\dagger(K)\right) \left(\frac{m}{n}\Phi^\dagger(X)\right)^{-t}\right]\ ,
\end{eqnarray*}
and this is the same as \eqref{liebM22}. 
\end{proof}

\begin{remark} It is easy to see that taking $\Phi$ as in \eqref{embed}, one recovers Theorem~\ref{L2} from Theorem~\ref{L2MB}.
\end{remark} 

We now combine duality with Theorem~\ref{L2MB} to prove:

\begin{thm}\label{Ep3}  Let $0 \leq s < 1$. Let $\Phi$ be any sesquiunital Schwarz map from $M_m(\C)$ to $M_n(\C)$ for  $m,n\in \N$. 
Then for all $A \in M_n^{++}(\C)$ and all invertible $B\in M_n(\C)$,  and all
\begin{equation}\label{qbound}
\frac{1}{2-s} \leq q < 1\ ,
\end{equation}
we have
		\begin{equation}\label{ext2Z} 
	\left(\frac{n}{m}\right)^{(2-s)q-1}\tr[ (B^*A^{-s} B)^{q}] \geq \tr[ (\Phi^\dagger(B)^*\Phi^\dagger(A)^{-s}\Phi^\dagger(B))^q]\ .
	\end{equation} 
\end{thm}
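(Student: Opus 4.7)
The plan is to mimic the duality strategy used in the proofs of Theorem~\ref{Ep2} and Theorem~\ref{Ep3A}: apply the min-variational formula \eqref{minlem1} to rewrite $\tr[(B^*A^{-s}B)^q]$ as an infimum of linear functionals over a suitable set of positive matrices, bound each linear functional below using the full Lieb Convexity monotonicity Theorem~\ref{L2MB}, and then enlarge the minimization set on the other side to recover a bound involving $\tr[(\Phi^\dagger(B)^*\Phi^\dagger(A)^{-s}\Phi^\dagger(B))^q]$.

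To set this up, define $t := (1-q)/q$. The hypothesis $\frac{1}{2-s}\le q<1$ translates precisely to $0<t\le 1-s$, so in particular $s+t\le 1$ with both exponents positive, which is exactly the parameter regime of Theorem~\ref{L2MB}. Applying \eqref{minlem1} with $r=q\in(0,1)$ and then substituting $Y=Z^{-t}$ (so that $\tr[Y^{q/(q-1)}]=1$ becomes $\tr Z=1$) yields
\begin{equation*}
\bigl(\tr[(B^*A^{-s}B)^q]\bigr)^{1/q}
=\min\bigl\{\tr[B^*A^{-s}B\,Z^{-t}]\,:\,Z\in M_n^{++}(\C),\ \tr Z=1\bigr\}.
\end{equation*}

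For each admissible $Z$, Theorem~\ref{L2MB} applied with $K=B$, $Y=A$, $X=Z$ gives
\begin{equation*}
\tr[B^*A^{-s}B\,Z^{-t}]
\ge \Bigl(\tfrac{n}{m}\Bigr)^{-(1-s-t)}\tr\bigl[\Phi^\dagger(B)^*\Phi^\dagger(A)^{-s}\Phi^\dagger(B)\,\Phi^\dagger(Z)^{-t}\bigr].
\end{equation*}
Taking minimum over $Z$ on both sides, I then want to replace $\Phi^\dagger(Z)$ by a free variable $W\in M_m^{++}(\C)$ of trace one, which can only lower the minimum. This step uses that $\Phi$ is sesquiunital: unitality of $\Phi$ makes $\Phi^\dagger$ trace preserving, while $\Phi^\dagger(\one)=(n/m)\one$ together with positivity guarantees that $\Phi^\dagger$ sends $M_n^{++}(\C)$ into $M_m^{++}(\C)$, so the image of the constraint set is contained in $\{W\in M_m^{++}(\C):\tr W=1\}$. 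Applying \eqref{minlem1} backwards on this enlarged set produces $(\tr[(\Phi^\dagger(B)^*\Phi^\dagger(A)^{-s}\Phi^\dagger(B))^q])^{1/q}$.

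To finish, raise both sides to the $q$-th power and simplify the exponent using $q(1-s-t)=q-sq-(1-q)=(2-s)q-1$, which is nonnegative by the hypothesis $q\ge 1/(2-s)$; this produces exactly \eqref{ext2Z}. I do not foresee a serious obstacle: the main point of care is the bookkeeping that $t=(1-q)/q$ lies in $(0,1-s]$ precisely under the stated bound on $q$, together with the set-inclusion argument of the previous paragraph, which is where sesquiunitality (as opposed to mere unitality or semiunitality) is genuinely used.
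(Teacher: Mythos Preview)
Your proposal is correct and follows essentially the same duality argument as the paper: both use the variational formula \eqref{minlem1} to linearize $(\tr[(B^*A^{-s}B)^q])^{1/q}$, apply Theorem~\ref{L2MB} pointwise with exponents $s$ and $t=(1-q)/q$ (the paper writes this as $(q-1)/q$ and packages the constant via $\delta:=2-s-1/q$), and then enlarge the minimization set using that $\Phi^\dagger$ is trace preserving and sends $M_n^{++}(\C)$ into $M_m^{++}(\C)$. Your explicit remark on why sesquiunitality is the right hypothesis (unitality for trace preservation of $\Phi^\dagger$, semiunitality for Theorem~\ref{L2MB} and positive-definiteness of $\Phi^\dagger(Z)$) is a nice clarification that the paper leaves implicit.
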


\begin{proof}
 Let $0 < q < 1$. Then by \eqref{minlem1}, 
\begin{eqnarray*}
(\tr[ (B^*A^{-s} B)^{q}])^{1/q} &=& \max\{ \tr[B^*A^{-s}BY]\ :\ Y\in M_n^{++}(\C)\ ,\ \tr[Y^{q/(q-1)}] =1\ \}\\
&=& \max\{ \tr[B^*A^{-s}BY^{(q-1)/q}]\ :\ Y\in M_n^{++}(\C)\ ,\ \tr[Y] =1\ \}\ .
\end{eqnarray*}
In order to apply Theorem~\ref{L2MB}, we require $s + (1-q)/q  \leq 1$, and together with $q < 1$, this yields the requirement that \eqref{qbound} be satisfied.
Define $\delta := 2-s-1/q$. Then
with \eqref{qbound}  satisfied, Theorem~\ref{L2MB} and the previous calculation yield
\begin{eqnarray*}
&&\left(\frac{n}{m}\right)^{\delta} (\tr[ (B^*A^{-s} B)^{q}])^{1/q} \\
&= & \left(\frac{n}{m}\right)^{\delta}\max\{ \tr[B^*A^{-s}BY^{(q-1)/q}]\ :\ Y\in M_n^{++}(\C)\ ,\ \tr[Y] =1\ \}\\
&\geq &\max\{ \tr[\Phi^\dagger(B)^*\Phi^\dagger(A)^{-s}\Phi^\dagger(B)\Phi^\dagger(Y)^{(q-1)/q}]\ :\ Y\in M_n^{++}(\C)\ ,\ \tr[Y] =1\ \}\\
&\geq &\max\{ \tr[\Phi^\dagger(B)^*\Phi^\dagger(A)^{-s}\Phi^\dagger(B)Z^{(q-1)/q}]\ :\ Z\in M_n^{++}(\C)\ ,\ \tr[Z] =1\ \}\\
&=& (\tr[ (\Phi^\dagger(B)^*\Phi^\dagger(A)^{-s}\Phi^\dagger(B))^q])^{1/q}\ .
\end{eqnarray*}
Taking the $q$-th power of both sides yields \eqref{ext2Z}.
\end{proof}

\begin{remark} Note that for $q$ saturating the lower bound in \eqref{qbound}, \eqref{ext2Z} reduces to \eqref{EpMon2}, so that Theorem~\ref{Ep3} generalizes Theorem~\ref{Ep3A}. 
\end{remark}

Now take $n=2m$, $A_1,A_2\in M_m^{++}(\C)$ and invertible $B_1,B_2\in M_m(\C)$, and let
$$
A := \left[\begin{array}{cc} A_1 & 0\\ 0 & A_2\end{array}\right] \quad{\rm and}\quad  B := \left[\begin{array}{cc} B_1 & 0\\ 0 & B_2\end{array}\right]\ .
$$
Let $\Phi$ be given by \eqref{embed}. Then 
$$
\frac12 \tr[ (B^*A^{-s} B)^{q}] = \frac12 \left( \tr[ (B_1^*A_1^{-s} B_1)^{q}]  + \tr[ (B_2^*A_2^{-s} B_2)^{q}] \right)\ ,
$$
and
\begin{eqnarray*}
\left(\frac12\right)^{(2-s)q}\tr[ (\Phi^\dagger(B)^*\Phi^\dagger(A)^{-s}\Phi^\dagger(B))^q] &=&
\left(\frac12\right)^{(2-s)q} \tr[((B_1+B_2)^* (A_1+A_2)^{-s} (B_1+B_2))^q] \\
&=&  \tr\left[\left(\left(\frac{B_1+B_2}{2}\right)^* \left(\frac{A_1+A_2}{2}\right)^{-s} \left(\frac{B_1+B_2}{2}\right)\right)^q\right]\ .
\end{eqnarray*}

By Theorem \ref{Ep3} this proves:

\begin{cl}  Let $0 \leq s < 1$ and let $1/(2-s) \leq q  < 1$. Then the map
\begin{equation}\label{Ep7}
(A,B) \mapsto \tr[(B^* A^{-s}B)^q]
\end{equation}
is jointly convex on $M_n^{++}(\C)\times M_n(\C)$. 
\end{cl}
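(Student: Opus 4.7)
The plan is to derive the corollary from Theorem~\ref{Ep3} by the same block-diagonal embedding trick that was used throughout the paper to pass from monotonicity to convexity/concavity. Specifically, I would take the map $\Phi:M_m(\C)\to M_{2m}(\C)$ of \eqref{embed}, given by $\Phi(X) = \mathrm{diag}(X,X)$, whose adjoint is $\Phi^\dagger(\mathrm{diag}_{2\times 2}\text{-matrix}) = A+D$. This $\Phi$ is unital completely positive (the diagonal embedding of a $*$-algebra), so in particular Schwarz, and it is sesquiunital with ratio $n/m = 2$. Thus Theorem~\ref{Ep3} applies, giving the multiplicative factor $2^{(2-s)q-1}$ on the left-hand side of \eqref{ext2Z}.

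Next I would substitute block-diagonal inputs: for $A_1,A_2 \in M_m^{++}(\C)$ and invertible $B_1,B_2 \in M_m(\C)$, set $A := \mathrm{diag}(A_1,A_2)$ and $B := \mathrm{diag}(B_1,B_2)$. Since functional calculus, multiplication, and conjugation all respect the block-diagonal structure, $B^*A^{-s}B = \mathrm{diag}(B_1^*A_1^{-s}B_1,\, B_2^*A_2^{-s}B_2)$ and hence
\[
\tr\bigl[(B^*A^{-s}B)^q\bigr] = \tr\bigl[(B_1^*A_1^{-s}B_1)^q\bigr] + \tr\bigl[(B_2^*A_2^{-s}B_2)^q\bigr].
\]
On the other side, $\Phi^\dagger(A) = A_1 + A_2$ and $\Phi^\dagger(B) = B_1 + B_2$, so the right-hand side of \eqref{ext2Z} is $\tr\bigl[((B_1+B_2)^*(A_1+A_2)^{-s}(B_1+B_2))^q\bigr]$. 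Pulling the scaling factor into the expression by homogeneity (the integrand is $(2-s)q$-homogeneous in the pair $(A,B)$ when $B$ scales by $1$ and $A$ scales by $1$, matching the exponent $(2-s)q-1$ absorbed from $2^{(2-s)q-1}$ after dividing both sides by $2$), the inequality \eqref{ext2Z} becomes exactly the midpoint convexity inequality
\[
\tr\!\left[\left(\tfrac{B_1+B_2}{2}\right)^{\!*}\!\left(\tfrac{A_1+A_2}{2}\right)^{-s}\!\left(\tfrac{B_1+B_2}{2}\right)\right]^q \leq \tfrac{1}{2}\bigl(\tr[(B_1^*A_1^{-s}B_1)^q] + \tr[(B_2^*A_2^{-s}B_2)^q]\bigr).
\]

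Finally I would remove the invertibility assumption on $B_1,B_2$ and extend to all of $M_n(\C)$ by continuity of the map $(A,B)\mapsto \tr[(B^*A^{-s}B)^q]$ on $M_n^{++}(\C) \times M_n(\C)$, which is clear since $A \mapsto A^{-s}$ is continuous on $M_n^{++}(\C)$ and $X \mapsto \tr[X^q]$ is continuous on $M_n^+(\C)$. Joint midpoint convexity combined with continuity then yields full joint convexity.

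The only step that requires any care is the homogeneity bookkeeping that converts the multiplicative prefactor $2^{(2-s)q-1}$ into the $1/2$ that appears in midpoint convexity; the exponent $(2-s)q-1$ has been engineered precisely so that when one rescales $\tfrac{A_1+A_2}{2}$ and $\tfrac{B_1+B_2}{2}$ by pulling out their factors of $1/2$ under the power $q$, the factors cancel cleanly. No new inequality beyond Theorem~\ref{Ep3} is needed; the whole argument is the passage from a dimension-dependent monotonicity statement to a dimension-free convexity statement via the trace-ratio construction, exactly as in the derivation of Corollary~\ref{cor3.4} from Theorem~\ref{Ep2}.
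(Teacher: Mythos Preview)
Your proposal is correct and follows essentially the same route as the paper: apply Theorem~\ref{Ep3} with the block-diagonal embedding $\Phi$ of \eqref{embed} (for which $n/m=2$), insert block-diagonal $A=\mathrm{diag}(A_1,A_2)$ and $B=\mathrm{diag}(B_1,B_2)$, and use the $(2-s)q$-homogeneity to convert the factor $2^{(2-s)q-1}$ into the midpoint-convexity inequality, then conclude by continuity. This is exactly the paper's derivation preceding the corollary.
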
 

A further corollary recover a result first proved in \cite{CL08}:

\begin{cl}  Let $1< p \leq 2$, and let $r \geq 1$. Then for all $B\in M_n(\C)$,  the map
\begin{equation}\label{Ep8}
A \mapsto \tr[(B^* A^p B)^{r/p}]
\end{equation}
is  convex on $M_n^{++}(\C)$.
\end{cl}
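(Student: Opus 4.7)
The plan is to split the claim into two complementary regimes, $r \geq p$ and $1 \leq r < p$, each handled by ingredients already in the paper.

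In the regime $r \geq p$ (so $r/p \geq 1$) I would give a direct chain-rule argument. Since $p \in [1,2]$, the map $A \mapsto B^{*}A^{p}B$ is operator convex on $M_n^{++}(\C)$; the scalar function $x \mapsto x^{r/p}$ is convex and monotone increasing on $[0,\infty)$; and by Weyl's monotonicity together with Klein's trace inequality, $X \mapsto \tr[X^{r/p}]$ is simultaneously monotone in the operator order and convex on $M_n^{+}(\C)$. Composing these three facts along any segment $A = \lambda A_1 + (1-\lambda)A_2$ produces convexity in the standard way.

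For the harder regime $1 \leq r < p$ (so $r/p \in [1/p,1)$) I would invoke the preceding joint-convexity corollary with the parameter choice $s := 2-p$ and $q := r/p$; the admissibility conditions $s \in [0,1)$ and $q \in [1/(2-s),1) = [1/p,1)$ correspond exactly to $p \in (1,2]$ and $r \in [1,p)$. The heart of the argument is the affine substitution $B' := AB$, which for $A > 0$ satisfies the identity
\begin{equation*}
(AB)^{*}A^{-s}(AB) \;=\; B^{*}A\,A^{-s}\,AB \;=\; B^{*}A^{2-s}B \;=\; B^{*}A^{p}B,
\end{equation*}
using only that $A$ commutes with its own powers. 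Since $A \mapsto (A, AB)$ is affine in $A$ for fixed $B$, evaluating the joint convexity of $(A,B') \mapsto \tr[((B')^{*}A^{-s}B')^{q}]$ along this affine family immediately reduces to midpoint convexity of $A \mapsto \tr[(B^{*}A^{p}B)^{r/p}]$, which extends to full convexity by continuity (as is done elsewhere in the paper).

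The main obstacle I anticipate is precisely spotting the parameter correspondence $s = 2-p$, $q = r/p$ together with the affine substitution $B' = AB$ that makes the identity $(B')^{*}A^{-s}B' = B^{*}A^{p}B$ hold; the positive exponent $A^{p}$ in the target does not match the negative exponent $A^{-s}$ of the previous corollary in any obvious way. Once this correspondence is found, the second case is no harder than the block-diagonal embedding argument used earlier to deduce Epstein's concavity theorem from Theorem~\ref{Ep1}.
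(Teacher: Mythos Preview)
Your proposal is correct and follows essentially the same route as the paper: the same two-regime split, and in the harder case $1\le r<p$ the identical affine substitution $B\mapsto AB$ with the parameter identification $s=2-p$, $q=r/p$ to pull the result back from the preceding joint-convexity corollary. The only difference is cosmetic: in the elementary regime $r\ge p$ the paper linearizes via the duality formula $\|B^*A^pB\|_q=\max_Y\tr[B^*A^pBY]$ and then raises to the $q$-th power, whereas you compose the operator convexity of $A\mapsto B^*A^pB$ directly with the monotone convex functional $X\mapsto\tr[X^{r/p}]$; both rest on the operator convexity of $A\mapsto A^p$ for $p\in(1,2]$ and are equally short.
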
 

\begin{proof}  First suppose that $1\leq r  <p$. As in  the proof of the Ando Convexity Theorem \cite{Ando1979concavity} as a direct consequence of the Lieb Convexity Theorem given in \cite{Carlen22review}, replace $B$ by $AB$ in \eqref{Ep7}, so that 
$\tr[(B^* A^{-s}B)^q]$ becomes $\tr[(B^* A^{2-s}B)^q]$. Define $p := 2-s$, and note that $1< p \le 2$.  Then $1/(2-s) \leq q <1$ becomes $1/p \leq q <1$.
Finally, replacing $q$ by $r/p$ yields \eqref{Ep8} in this case. 

The case $r \ge  p$ is elementary and does not depend on Theorem~\ref{L2M}. As in \cite[Lemma 2.1]{CL08} (see also
second part of the proof of Corollary~\ref{cor3.4}), one first shows by duality that with $q= r/p$, $f(A) := \|B^* A^{p}B\|_q$ is convex. Then $g(A) := \tr[(B^* A^{p}B)^q] = (f(A))^q$ is convex since $q \geq 1$. 
\end{proof}

\section{Interpolation proof of Theorems~\ref{L1M} and \ref{L2M}}

We give a simple proof of the monotonicity version of the Lieb Concavity Theorem for Schwarz maps using Lieb's original interpolation argument \cite{Lieb73WYD} adapted to this setting. This gives a third proof of this result and a further illustration of how direct proof of monotonicity can be both simpler and more general than direct proofs of concavity/convexity. 

\begin{proof}[Interpolation Proof of Theorem~\ref{L1M}]
	We don't need to reduce it to the case when $A=B$ that requires 2-positivity of $\Phi$. By approximation, we may assume that all the matrices $A,B,K$ and their images under $\Phi^\dagger$ are invertible. Denote $M:=\Phi^\dagger(A)^{p/2} K\Phi^\dagger(B)^{(1-p)/2}$. Then
	\begin{equation}
	\tr [K^\ast \Phi^\dagger(A)^{p} K \Phi^\dagger(B)^{1-p}]=\tr[ M^\ast M] \ .
	\end{equation}
	 Denote by $S$ the strip $\{z\in\mathbb{C}:0<\Re z<1\}$ and $\overline{S}$ its closure. Consider the following function
	\begin{equation*}
	f(z):=\tr\left[ \Phi\left( \Phi^\dagger (B)^{-\frac{1-z}{2}} M^\ast \Phi^\dagger(A)^{-\frac{z}{2}}\right)A^{z}\Phi\left( \Phi^\dagger(A)^{-\frac{z}{2}}M\Phi^\dagger(B)^{-\frac{1-z}{2}}\right)B^{1-z}\right]
	\end{equation*}
	on $\overline{S}$. Then it is analytic on $S$ and continuous on $\overline{S}$. It is also uniformly bounded on $\overline{S}$. In fact, for any $z=x+iy\in \overline{S}$ with $x\in [0,1]$ and $y\in \R$, by H\"older's inequality
	\begin{align*}
	|f(z)|\le &\|\Phi(Y(z)) A^z \Phi(X(z))B^{1-z}\|_1\\
	\le &\|\Phi(X(z))\|_2 \|\Phi(Y(z))\|_2\|A^{z}\|\|B^{1-z}\|\\
	\le &\|\Phi(X(z))\|_2  \|\Phi(Y(z))\|_2\|A\|^{x}\|B\|^{1-x},
	\end{align*}
	where $X(z):=\Phi^\dagger(A)^{-z/2}M\Phi^\dagger(B)^{(z-1)/2}$ and $Y(z):=X(\overline{z})^\ast=\Phi^\dagger (B)^{(z-1)/2} M^\ast \Phi^\dagger(A)^{-z/2}$. Since $\Phi$ is a Schwarz map, we have
	\begin{equation*}
	\|\Phi(X(z))\|_2^2
	= \tr \left[\Phi(X(z))^\ast \Phi(X(z))\right]
	\le \tr \left[\Phi(X(z)^\ast X(z))\right]
	\le \|\Phi^\dagger(\un)\|\tr \left[X(z)^\ast X(z)\right],
	\end{equation*}
	where the last inequality follows from H\"older's inequality. By definition, 
	\begin{equation*}
	\tr \left[X(z)^\ast X(z)\right]=\tr \left[ M^\ast \Phi^\dagger(A)^{-x}M \Phi^\dagger(B)^{x-1}\right] \ .
	\end{equation*}
	Similarly, we have 
	\begin{equation*}
	\|\Phi(Y(z))\|_2^2=\|\Phi(X(\overline{z}))^\ast\|_2^2\le \|\Phi^\dagger(\un)\|\tr \left[ M^\ast \Phi^\dagger(A)^{-x}M \Phi^\dagger(B)^{x-1}\right] \ .
	\end{equation*}
	Therefore we have proved
	\begin{equation*}
	\sup_{z\in \overline{S}}|f(z)|\le \|\Phi^\dagger(\un)\| \sup_{x\in [0,1]}\|A\|^x \|B\|^{1-x} \tr \left[ M^\ast \Phi^\dagger(A)^{-x}M \Phi^\dagger(B)^{x-1}\right]<\infty \ .
	\end{equation*}
	So $f$ is uniformly bounded on $\overline{S}$.  Note that by definition
	\begin{equation*}
	f(p)=\tr [\Phi(K)^\ast A^{p} \Phi(K)B^{1-p}] \ .
	\end{equation*}
	By the three-lines lemma, it remains to check that 
	\begin{equation}\label{ineq:boundary}
	\sup_{y\in\mathbb{R}}|f(k+iy)|\le \tr [M^\ast M], \quad {\rm for } \quad k=0,1 \ .
	\end{equation}
	By definition,
	\begin{align*}
	f(iy)=&\tr\left[ \Phi\left( \Phi^\dagger (B)^{-\frac{1-iy}{2}} M^\ast \Phi^\dagger(A)^{-\frac{iy}{2}}\right)A^{iy}\Phi\left( \Phi^\dagger(A)^{-\frac{iy}{2}}M\Phi^\dagger(B)^{-\frac{1-iy}{2}}\right)B^{1-iy}\right]\\
	=&\tr \left[  \Phi^\dagger (B)^{-\frac{1-iy}{2}} M^\ast \Phi^\dagger(A)^{-\frac{iy}{2}}\Phi^\dagger\left(X\right)\right]\\
	=&\tr \left[  \Phi^\dagger (B)^{\frac{iy}{2}} M^\ast \Phi^\dagger(A)^{-\frac{iy}{2}}\cdot \Phi^\dagger\left(X\right)\Phi^\dagger(B)^{-\frac{1}{2}}\right]
	\end{align*}
	where $X=A^{iy}\Phi\left( \Phi^\dagger(A)^{-iy/2}M\Phi^\dagger(B)^{(iy-1)/2}\right)B^{1-iy}$. By Cauchy--Schwarz inequality, we have
	\begin{align*}
	|f(iy)|^2\le \tr [M^\ast M] \tr [\Phi^\dagger(X)\Phi^\dagger(B)^{-1}\Phi^\dagger(X^\ast)]\ .
	\end{align*}
	Since $\Phi$ is Schwarz, by \eqref{tracial} or \cite[Theorem 4]{CMH22}
	\begin{align*}
	\tr [\Phi^\dagger(X)\Phi^\dagger(B)^{-1}\Phi^\dagger(X^\ast)]
	\le&\tr [XB^{-1}X^\ast]\\
	=&\tr \left[B\Phi\left( \Phi^\dagger(B)^{-\frac{1+iy}{2}}M^\ast\Phi^\dagger(A)^{\frac{iy}{2}}\right)\Phi\left( \Phi^\dagger(A)^{-\frac{iy}{2}}M\Phi^\dagger(B)^{-\frac{1-iy}{2}}\right)\right].
	\end{align*}
	Using again that $\Phi$ is Schwarz, this is bounded from above by
	\begin{align*}
	\tr \left[B\Phi\left( \Phi^\dagger(B)^{-\frac{1+iy}{2}}M^\ast M\Phi^\dagger(B)^{-\frac{1-iy}{2}}\right)\right]
	=\tr [M^\ast M]\ .
	\end{align*}
	So we have proved \eqref{ineq:boundary} for $k=0$. The proof of \eqref{ineq:boundary} for $k=1$ is similar.
\end{proof}

Using a similar argument, we can also give an interpolation proof of the monotonicity version of Lieb Convexity Theorem, Theorem~\ref{L2M}.
\begin{proof}[Interpolation Proof of Theorem~\ref{L2M}]
	Again, we may assume that all the matrices $X,Y,K$ and their images under $\Phi^\dagger$ are invertible. Denote $M:=Y^{(t-1)/2}KX^{-t/2}$. Then 
	$$\tr[K^\ast Y^{t-1}KX^{-t}]=\tr[ M^\ast M]\ . $$
	Let $S$ be the open strip as above. Consider the following function 
	\begin{equation*}
	g(z):=\tr\left[\Phi^\dagger\left(X^{\frac{z}{2}}M^\ast Y^{\frac{1-z}{2}}\right)\Phi^{\dagger}(Y)^{z-1}\Phi^\dagger\left(Y^{\frac{1-z}{2}}MX^{\frac{z}{2}}\right)\Phi^\dagger(X)^{-z}\right]
	\end{equation*}
	on $\overline{S}$. Then it is analytic on $S$ and continuous on $\overline{S}$. It is uniformly bounded on $\overline{S}$, which we now argue as above. For any $z=x+iy\in \overline{S}$ with $x\in [0,1]$ and $y\in \R$, by H\"older's inequality
	\begin{align*}
	|g(z)|\le &\|\Phi^\dagger(B(z))\Phi^{\dagger}(Y)^{z-1}\Phi^\dagger(A(z))\Phi^\dagger(X)^{-z}\|_1\\
	\le &\|\Phi^\dagger(A(z))\|_2 \|\Phi^\dagger(B(z))\|_2 \|\Phi^\dagger(Y)^{z-1}\| \|\Phi^\dagger(X)^{-z}\| \ ,
	\end{align*}
	where $A(z):=Y^{(1-z)/2}MX^{z/2}$ and $B(z):=A(\overline{z})^\ast=X^{z/2}M^\ast Y^{(1-z)/2}$.
	Note that $x-1,-x\le 0$, we get
	\begin{equation*}
	\|\Phi^\dagger(Y)^{z-1}\| \|\Phi^\dagger(X)^{-z}\| =\|\Phi^\dagger(Y)^{-1}\|^{1-x} \|\Phi^\dagger(X)^{-1}\|^{x} \ .
	\end{equation*} 
	Since $\Phi$ is Schwarz, by \eqref{tracialB}
	\begin{equation*}
	\|\Phi^\dagger(A(z))\|_2^2
	=\tr\left[\Phi^\dagger(A(z))^\ast \Phi^\dagger(A(z))\right]
	\le \|\Phi^\dagger(\un)\|\tr\left[A(z)^\ast A(z)\right]
	=\|\Phi^\dagger(\un)\|\tr\left[M^\ast Y^{1-x}MX^{x}\right] \ .
	\end{equation*}
	Similarly, 
		\begin{equation*}
	\|\Phi^\dagger(B(z))\|_2^2
	=\|\Phi^\dagger(A(\overline{z}))^\ast\|_2^2
	\le \|\Phi^\dagger(\un)\|\tr\left[M^\ast Y^{1-x}MX^{x}\right] \ .
	\end{equation*}
	So we have shown that 
	\begin{equation*}
	\sup_{z\in \overline{S}}|g(z)|\le \|\Phi^\dagger(\un)\| \sup_{x\in [0,1]}\|\Phi^\dagger(Y)^{-1}\|^{1-x} \|\Phi^\dagger(X)^{-1}\|^{x}\|\tr\left[M^\ast Y^{1-x}MX^{x}\right]<\infty \ .
	\end{equation*}
	Hence $g$ is uniformly bounded on $\overline{S}$.  Note that by definition 
	\begin{equation*}
	g(t)=\tr[\Phi^\dagger(K)^\ast \Phi^\dagger(Y)^{t-1}\Phi^\dagger(K)\Phi^\dagger(X)^{-t}]\ .
	\end{equation*}
	So by the three-lines lemma, it suffices to show that 
	\begin{equation}\label{ineq:boundary2}
	\sup_{y\in\mathbb{R}}|g(k+iy)|\le \tr [M^\ast M], \quad {\rm for } \quad k=0,1.
	\end{equation}
	When $k=0$, we have 
	\begin{align*}
	g(iy)&=\tr\left[\Phi^\dagger\left(X^{\frac{iy}{2}}M^\ast Y^{\frac{1-iy}{2}}\right)\Phi^{\dagger}(Y)^{iy-1}\Phi^\dagger\left(Y^{\frac{1-iy}{2}}MX^{\frac{iy}{2}}\right)\Phi^\dagger(X)^{-iy}\right]\\
	&=\tr\left[X^{\frac{iy}{2}}M^\ast Y^{\frac{1-iy}{2}} \Phi(A)\right]\\
	&=\tr\left[X^{\frac{iy}{2}}M^\ast Y^{-\frac{iy}{2}}\cdot Y^{\frac{1}{2}}\Phi(A)\right] \ ,
	\end{align*}
	where $A:=\Phi^{\dagger}(Y)^{iy-1}\Phi^\dagger\left(Y^{(1-iy)/2}MX^{iy/2}\right)\Phi^\dagger(X)^{-iy}$.
	By Cauchy--Schwarz inequality,
	\begin{equation*}
	|g(iy)|^2\le \tr[M^\ast M]\tr[\Phi(A)^\ast Y \Phi(A)]\ .
	\end{equation*}
	Since $\Phi$ is a Schwarz map, we have
	\begin{align*}
	\tr[\Phi(A)^\ast Y \Phi(A)]
	\le \tr\left[ A^\ast \Phi^\dagger(Y) A\right]
	=\tr\left[\Phi^\dagger(B)^\ast \Phi^\dagger(Y)^{-1}\Phi^\dagger(B)\right] \ ,
	\end{align*}
	where $B:=Y^{(1-iy)/2}MX^{iy/2}$. Again, since $\Phi$ is Schwarz,  by \eqref{tracial} or \cite[Theorem 4]{CMH22} that
	\begin{equation*}
	\tr\left[\Phi^\dagger(B)^\ast \Phi^\dagger(Y)^{-1}\Phi^\dagger(B)\right]
	\le \tr \left[B^\ast Y^{-1} B\right]
	=\tr [M^\ast M]\ .
	\end{equation*}
	So we have proved \eqref{ineq:boundary2} for $k=0$. The proof for $k=1$ is similar. 
\end{proof}

\end{document}